\title{Sparse Bayesian Inference with Regularized Gaussian Distributions\thanks{
\funding{This work was supported by the Villum Foundation (grant no.\ 25893 and VIL50096) and the Novo Nordisk Foundation (grant no.\ NNF20OC0061894).}}}
\author{Jasper M. Everink\thanks{Department of Applied Mathematics and Computer Science, Technical University of Denmark. Richard Petersens
Plads, Building 324, DK-2800 Kgs. Lyngby, Denmark. (\href{mailto:jmev@dtu.dk}{jmev@dtu.dk}, \href{mailto:yido@dtu.dk}{yido@dtu.dk}, \href{mailto:mskan@dtu.dk}{mskan@dtu.dk})}\and Yiqiu Dong\footnotemark[2] \and Martin S. Andersen\footnotemark[2]}
\DeclareMathOperator{\support}{supp}
\DeclareMathOperator{\closure}{cl}
\DeclareMathOperator{\dom}{dom}
\DeclareMathOperator{\range}{range}
\DeclareMathOperator{\bd}{bd}
\DeclareMathOperator{\relint}{relint}
\DeclareMathOperator*{\argmin}{argmin}
\DeclareMathOperator*{\argmax}{argmax}
\DeclareMathOperator{\Rank}{rank}
\DeclareMathOperator{\conv}{conv}
\DeclareMathOperator{\supp}{supp}
\DeclareMathOperator{\cl}{cl}
\DeclareMathOperator{\prox}{prox}
\newcommand{\nullspace}{{\text{Null}}}
\newcommand{\reals}{\mathbb{R}}
\newcommand{\epi}{\text{epi}}
\newcommand{\symm}{\mathbb{S}}
\newcommand{\prob}{\mathbb{P}}
\renewcommand{\vec}[1]{\bm{#1}}
\newcommand{\set}[1]{\mathbf{#1}}
\newcommand*{\addFileDependency}[1]{
  \typeout{(#1)}
  \@addtofilelist{#1}
  \IfFileExists{#1}{}{\typeout{No file #1.}}
}
\begin{document}

\maketitle

\begin{abstract}
Regularization is a common tool in variational inverse problems to impose assumptions on the parameters of the problem. One such assumption is sparsity, which is commonly promoted using lasso and total variation-like regularization. Although the solutions to many such regularized inverse problems can be considered as points of maximum probability of well-chosen posterior distributions, samples from these distributions are generally not sparse. In this paper, we present a framework for implicitly defining a probability distribution that combines the effects of sparsity imposing regularization with Gaussian distributions. Unlike continuous distributions, these implicit distributions can assign positive probability to sparse vectors. We study these regularized distributions for various regularization functions including total variation regularization and piecewise linear convex functions. We apply the developed theory to uncertainty quantification for Bayesian linear inverse problems and derive a Gibbs sampler for a Bayesian hierarchical model. To illustrate the difference between our sparsity-inducing framework and continuous distributions, we apply our framework to small-scale deblurring and computed tomography examples.
\end{abstract}

{\small \textbf{Keywords:} Bayesian inference, regularization, inverse problems, uncertainty quantification}

{\small \textbf{AMS subject classification:} 62F15, 65C05, 90C25}

\section{Introduction}
A common method for reconstructing a signal $\vec{x} \in \reals^n$ from noisy measurements $\vec{b} = A\vec{x} + \vec{e}$, where $A\in \reals^{m\times n}$ is a linear forward operator and $\vec{e}\in \reals^m$ is noise, is to solve a regularized linear least squares problem of the form
\begin{equation}\label{eq:regularized_least_squares} \argmin_{\vec{x}\in\reals^n}\left\{\frac{\lambda}{2} \|A\vec{x} - \vec{b}\|^2_2 + f(\vec{x})\right\},
\end{equation}
where the function $f$ is chosen to make the optimization problem well-posed and to improve the reconstruction by penalizing unwanted behavior. A common choice is to promote sparsity in the reconstruction by choosing $l_1$-based regularization functions of the form $f(\vec{x}) = \gamma \|D\vec{x}\|_1$, where $\gamma > 0$ is the strength of the regularization and $D$ is a linear operator representing the basis in which the reconstruction is to be sparse. Such regularization functions have been studied in areas like compressed sensing with the basis pursuit algorithm \cite{foucart2013compressive} and in image processing with total variation regularization \cite{hansen2010discrete}.

Reconstructing the signal using \eqref{eq:regularized_least_squares} can be motivated by its solution being the maximum a posteriori (MAP) estimate of a suitable posterior distribution, i.e.,
\begin{equation}\label{eq:MAP}
    \argmin_{\vec{x}\in\reals^n}\left\{\frac{\lambda}{2} \|A\vec{x} - \vec{b}\|^2_2 + f(\vec{x})\right\} = \argmax_{\vec{x}\in\reals^n} \pi(\vec{x}\,|\,\vec{b}),
\end{equation}
where,
\begin{equation}\label{eq:posterior_general}
    \pi(\vec{x}\,|\,\vec{b}) \,\propto\, \pi(\vec{b}\,|\,\vec{x}) \pi(\vec{x}) \, \propto \, \exp\left(-\frac{\lambda}{2} \|A\vec{x} - \vec{b}\|^2_2 - f(\vec{x})\right).
\end{equation}
The likelihood function $\pi(\vec{b}\,|\,\vec{x})\, \propto\, \exp\left(-\frac{\lambda}{2} \|A\vec{x} - \vec{b}\|^2_2\right)$ is obtained from assuming that the error $\vec{e} \sim \mathcal{N}(\vec{0}, \lambda^{-1} I)$, i.e., normally distributed with mean zero and covariance $\lambda^{-1} I$, whilst the prior $\pi(\vec{x})\, \propto\, \exp(-f(\vec{x}))$ describes any further assumptions we make on the reconstruction.

Although the MAP estimate \eqref{eq:MAP} can have guaranteed sparsity for suitable regularization functions $f$, the corresponding posterior distribution \eqref{eq:posterior_general} assigns zero probability to these sparse solutions. This is because sparsity is represented by low-dimensional subspaces which are always assigned zero probability by continuous probability distributions like \eqref{eq:posterior_general}.

A method for creating probability distributions that assign positive probability to low-dimensional subspaces is by using variable dimension models, see \cite{robert1999monte}. In these methods, a variety of models $M_i$ are constructed, each having their own probability distribution $\pi(\vec{x}\,|\,M_i)$, possibly supported on a low-dimensional subspace. These model distributions are then combined with a model prior distribution $\pi(M_i)$. For sparsity, each model distribution would be supported on a sparse subspace and the model prior distribution corresponds to assigning a prior distribution on the sparsity of the reconstruction.

Sampling from these variable dimension models is quite challenging. A common method for sampling such distributions are Reversible Jump Markov Chain Monte Carlo (RJMCMC) method \cite{green1995reversible}. This method requires a lot of tuning and easily fails for large dimensional problems. An alternative method is the Shrinkage-Thresholding Metropolis adjusted Langevin
algorithm (STMALA) \cite{schreck2015shrinkage} which provides an efficient proposal for an RJMCMC-like algorithm based on MALA. Although this method works better on larger scale problems, it still requires difficult parameter tuning and can easily result in highly correlated samples.

In \cite{everink2022bayesian}, we showed that for a closed and convex set $\set{C} \subset \reals^n$, the distribution of the constrained linear least squares problem \begin{equation}\label{eq:pertubed_constrained_linear_least_squares}
\argmin_{\vec{x}\in \set{C}}\left\{\frac{\lambda}{2} \|A\vec{x} - \hat{\vec{b}}\|^2_2 + \frac{\delta}{2} \|L\vec{x} - \hat{\vec{c}}\|^2_2\right\},
\end{equation}
with $\nullspace(A) \cap \nullspace(L) = \{\vec{0}\}$, $\hat{\vec{b}} \sim \mathcal{N}(\vec{b}, \lambda^{-1} I)$ and $\hat{\vec{c}} \sim \mathcal{N}(\vec{c}, \delta^{-1} I)$, is identical to sampling from the Gaussian posterior
\begin{equation}\label{eq:posterior_gauss}
    \pi(\vec{x}\,|\,\vec{b}) \,\propto\, \pi(\vec{b}\,|\,\vec{x}) \pi(\vec{x}) \, \propto \, \exp\left(-\frac{\lambda}{2} \|A\vec{x} - \vec{b}\|^2_2 - \frac{\delta}{2} \|L\vec{x} - \vec{c}\|^2_2\right),
\end{equation}
and projecting the sample onto the constraints set $\set{C}$ with respect to the norm $\|\cdot\|_{\lambda A^TA + \delta L^TL}$. Furthermore, if the constraint set $\set{C}$ is polyhedral, then the resulting projected Gaussian assigns positive probability to the various low-dimensional faces of $\set{C}$. The structure of this projected Gaussian distribution is similar to those of variable dimension models and results in independent samples, however, the corresponding probability distribution is implicit and for each sample, an instance of optimization problem \eqref{eq:pertubed_constrained_linear_least_squares} has to be solved accurately.

This method is similar to other optimization-based sampling methods like Perturbation-Optimization \cite{orieux2012sampling} for sampling from Gaussian distributions and Randomize-Then-Optimize (RTO) \cite{wang2017bayesian} for non-linear inverse problems. However, these methods are restricted to sampling from explicitly described continuous probability distributions, whilst our method results in a probability distribution for which we generally do not have an explicit expression.

In this paper, we generalize most of the results in \cite{everink2022bayesian} about \eqref{eq:pertubed_constrained_linear_least_squares}, by studying the optimization problem
\begin{equation}\label{eq:perturbed_regularized_linear_least_squares}\argmin_{\vec{x}\in \reals^n}\left\{\frac{\lambda}{2} \|A\vec{x} - \hat{\vec{b}}\|^2_2 + \frac{\delta}{2} \|L\vec{x} - \hat{\vec{c}}\|^2_2 + f(\vec{x})\right\},
\end{equation}
where $f$ is a sparsity promoting regularization. The case $f(\vec{x}) = \|D\vec{x}\|_1$, where $D$ is a diagonal matrix, has been studied before in \cite{ewald2020distribution}. We mainly consider the class of convex piecewise linear functions $f$. This class includes $l_1$-norm based regularization functions and the polyhedral constraints considered in \cite{everink2022bayesian}. We derive a characterization of the distribution obtained through the regularized and randomized linear least squares problem \eqref{eq:perturbed_regularized_linear_least_squares}. The properties we derive are applied to Bayesian linear inverse problems and hierarchical models and tested using some numerical experiments.

This paper is organized as follows. In Section \ref{sec:regularized_distributions}, we transform problem \eqref{eq:perturbed_regularized_linear_least_squares} into a general framework using proximal operators and study the properties of the resulting regularized Gaussian distributions. We will apply this theory in Section \ref{sec:inverse_problems} to Bayesian linear inverse problems and derive an algorithm for Bayesian hierarchical models in Section \ref{sec:bayes_hierarch}. Finally, numerical experiments based on these models are covered in Section \ref{sec:numerical_examples}.

\section{Regularized Multivariate Distributions}\label{sec:regularized_distributions}

We discuss a theory for combining the effects of regularization to probability distributions. In Subsection \ref{subsec:general_theory}, the definition of a regularized Gaussian is presented together with some general theory about regularized distributions. In Subsection \ref{subsec:sparsity}, the focus will be on regularization functions that introduce sparsity when combined with Gaussian distributions.

\subsection{General theory}\label{subsec:general_theory}

If $A \in \reals^{m\times n}$ has rank $n$, then the regularized least squares problem with random data $\hat{\vec{b}} \sim \mathcal{N}(\vec{b}, \lambda^{-1} I)$ can be transformed as follows,
\begin{align}
&\argmin_{\vec{x}\in \reals^n}\left\{\frac{\lambda}{2} \|A\vec{x} - \hat{\vec{b}}\|^2_2 + f(\vec{x})\right\} \nonumber\\
= &\argmin_{\vec{x}\in \reals^n}\left\{\frac12 \|\vec{x} - (A^TA)^{-1}(A^T\hat{\vec{b}})\|^2_{\lambda A^TA} + f(\vec{x})\right\} \nonumber\\
= &\argmin_{\vec{x}\in \reals^n}\left\{\frac12 \|\vec{x} - \vec{x}^\star\|^2_{\Sigma^{-1}} + f(\vec{x})\right\}, \label{eq:proximal_derivation}
\end{align}
where $\vec{x}^\star = (A^TA)^{-1}(A^T\hat{\vec{b}})$ has a Gaussian distribution with density
\begin{equation}
    \pi_{\vec{x}^\star}(\vec{x})\,\propto\, \exp\left(-\frac{\lambda}{2} \|A\vec{x} - \vec{b}\|^2_2\right),
\end{equation}
and covariance $\Sigma = (\lambda A^TA)^{-1}$. Due to this transformation, we can study the general setting of optimization problem \eqref{eq:proximal_derivation}, where $\vec{x}^\star$ follows any Gaussian distribution. This leads to the following assumption and definition of the object that will be the focus of study in this section.

\begin{assumption}\label{as:gaussian}
Let $\vec{x}^\star$ be a Gaussian random vector satisfying $\vec{x}^\star \sim \mathcal{N}(\vec{\mu}, \Sigma)$ with mean $\vec{\mu} \in \reals^n$ and covariance matrix $\Sigma \in \symm_{++}^n$, where $\symm_{++}^n$ denotes the set of symmetric positive definite real matrices of order $n$.
\end{assumption}

\begin{definition}\label{def:regularized_Gaussian}
Under Assumption \ref{as:gaussian}. If $f : \reals^n \rightarrow \reals \cup \{\infty\}$ is a proper lower semi-continuous convex function, then the Gaussian $\mathcal{N}(\vec{\mu}, \Sigma)$ regularized by $f$ is defined by 
\begin{equation}\label{eq:regularized_Gaussian}
    \prox_{f}^{\Sigma^{-1}}(\vec{x}^\star) := \argmin_{\vec{z}\in\reals^n}\left\{\frac12 \|\vec{z} - \vec{x}^\star\|^2_{\Sigma^{-1}} + f(\vec{z})\right\},
\end{equation}
where $\prox_{f}^{\Sigma^{-1}}$ is the proximal operator with respect to the norm $\|\cdot\|_{\Sigma^{-1}}$.
\end{definition}

Due to its relation to regularized linear least squares, the focus of this work is on regularized Gaussian distributions, but the proximal operator can be applied to any probability distribution. If $\pi$ is any probability measure on $\reals^n$, then by continuity of the proximal operator $\prox_{f}^{\Sigma^{-1}}$, the pushforward measure $\pi \circ \left[\prox_{f}^{\Sigma^{-1}}\right]^{-1}$ is a well-defined probability measure on $\reals^n$. Furthermore, some of the results presented in this work do not rely on $\pi$ being Gaussian and will therefore easily generalize to other distributions.

For most proper lower semi-continuous convex functions $f$, we cannot compute an explicit expression for $\prox_f^{\Sigma^{-1}}$, however, for some simple examples an explicit expression does exist. Let $f(\vec{z}) = \frac12\|D\vec{z}- \vec{d}\|_2^2$ for any $D \in \reals^{k\times n}$ and $\vec{d} \in \reals^{k}$, i.e., generalized Tikhonov regularization. For such regularization, the regularized Gaussian is
\begin{equation*}
    \prox_{f}^{\Sigma^{-1}}(\vec{x}^\star) = (\Sigma^{-1} + D^TD)^{-1}(\Sigma^{-1}\vec{x}^\star + D^T\vec{d}),
\end{equation*}
which is again a Gaussian distribution.  However, in general, the regularized Gaussian is not a Gaussian anymore. For a more complicated example of an explicit description of a regularized Gaussian, see \cite[Lemma II.2]{schreck2015shrinkage}. Let $f(\vec{x}) = \chi_{\set{C}}(\vec{x})$ be the characteristic function of a closed and convex set $\set{C} \subset \reals^n$, i.e., $f(\vec{x})$ is zero on $\set{C}$ and infinite otherwise. Then $\prox_{f}^{\Sigma^{-1}}(\vec{x}^\star) = \Pi^{\Sigma^{-1}}_{\set{C}}(\vec{x}^\star)$, where $\Pi^{\Sigma^{-1}}_{\set{C}}(\vec{x})$ denotes the oblique projection of $\vec{x}$ onto $\set{C}$. In this case, $\prox_{f}^{\Sigma^{-1}}(\vec{x}^\star)$ has the same density as $\vec{x}^\star$ on the interior of $\set{C}$, but all the mass outside of $\set{C}$ gets transported to the boundary of $\set{C}$. We have studied this projection setting specifically in \cite{everink2022bayesian}. 

In general, the following proposition describes the support of the regularized Gaussian.
\begin{proposition}\label{prop:regularized_support}
Under Assumption \ref{as:gaussian}. If $f : \reals^n \rightarrow \reals \cup \{\infty\}$ is a proper lower semi-continuous convex function, then,
\begin{equation*}
    \supp(\prox_{f}^{\Sigma^{-1}}(\vec{x}^\star)) = \cl(\dom(f)),
\end{equation*}
where $\cl(\set{C})$ denotes the closure of $\set{C}$ and $\supp(\vec{z}^\star)$ denotes the support of the random vector $\vec{z}^\star$, i.e, the set of points for which any open set around that point is assigned positive probability by $\vec{z}^\star$.
\end{proposition}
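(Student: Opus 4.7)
The plan is to establish the two inclusions separately, with the forward inclusion being essentially immediate and the reverse inclusion requiring a subdifferential argument.

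First, I would prove that $\supp(\prox_f^{\Sigma^{-1}}(\vec{x}^\star)) \subseteq \cl(\dom(f))$. By definition, for any input $\vec{x}\in\reals^n$ the minimizer $\prox_f^{\Sigma^{-1}}(\vec{x})$ must lie in $\dom(f)$ since otherwise the objective in \eqref{eq:regularized_Gaussian} is $+\infty$. So the image of the proximal map is contained in $\dom(f)\subseteq \cl(\dom(f))$, and since the support of any random vector is closed, it is contained in $\cl(\dom(f))$.

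For the reverse inclusion, I would fix an arbitrary $\vec{z}\in \cl(\dom(f))$ and an open neighborhood $U$ of $\vec{z}$, and show that $\prob(\prox_f^{\Sigma^{-1}}(\vec{x}^\star) \in U) > 0$. Since $\dom(f)$ is convex and nonempty, $\relint(\dom(f))$ is nonempty and dense in $\cl(\dom(f))$, so I can pick a point $\vec{z}_0 \in \relint(\dom(f)) \cap U$. At this point $\partial f(\vec{z}_0)$ is nonempty, so I choose any $\vec{g} \in \partial f(\vec{z}_0)$ and set $\vec{x}^\star_0 = \vec{z}_0 + \Sigma \vec{g}$. The first-order optimality condition for \eqref{eq:regularized_Gaussian} then gives $\prox_f^{\Sigma^{-1}}(\vec{x}^\star_0) = \vec{z}_0 \in U$.

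Next, using that the proximal operator of a proper lower semi-continuous convex function is (Lipschitz) continuous with respect to the norm $\|\cdot\|_{\Sigma^{-1}}$ (and hence in the standard topology since $\Sigma \in \symm_{++}^n$), I obtain an open neighborhood $V$ of $\vec{x}^\star_0$ with $\prox_f^{\Sigma^{-1}}(V) \subseteq U$. Because a nondegenerate Gaussian distribution has full support on $\reals^n$, $\prob(\vec{x}^\star \in V) > 0$, and therefore $\prob(\prox_f^{\Sigma^{-1}}(\vec{x}^\star) \in U) > 0$, establishing that $\vec{z} \in \supp(\prox_f^{\Sigma^{-1}}(\vec{x}^\star))$.

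The main technical obstacle is the reverse inclusion: I need to confirm that every boundary point of $\dom(f)$ is in the support, not just the interior. The density of $\relint(\dom(f))$ in $\cl(\dom(f))$ lets me avoid any case analysis on boundary behavior; rather than trying to hit $\vec{z}$ itself by the prox, I only need to hit some point of $U$. The subdifferential existence at relative interior points is the key structural fact that makes the image of the prox large enough, and the full support of the nondegenerate Gaussian is what converts a single successful input $\vec{x}^\star_0$ into positive probability via continuity.
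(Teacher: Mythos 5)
Your proof is correct and follows essentially the same route as the paper, which simply combines the continuity of $\prox_f^{\Sigma^{-1}}$, the full support of the nondegenerate Gaussian, and the density of the proximal operator's range in $\cl(\dom(f))$ via a general pushforward-support lemma; you have merely unpacked that lemma in this specific setting. If anything, your version is slightly more careful: the paper asserts $\range(\prox_f^{\Sigma^{-1}}) = \dom(f)$, whereas the range is in fact $\dom(\partial f)$, which can omit boundary points of $\dom(f)$ where the subdifferential is empty; your argument via $\relint(\dom(f))$, where subgradients always exist, sidesteps this and still yields the correct closure.
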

\begin{proof}
Use that $\range(\prox_{f}^{\Sigma^{-1}}) = \dom(f)$ in Lemma \ref{lemma:pushforward_support}.
\end{proof}

Note that for the case of $f(\vec{x}) = \chi_{\set{C}}(\vec{x})$ the projection $\Pi^{\Sigma^{-1}}_{\set{C}}$ maps all samples outside the constraint set $\set{C}$ to the boundary $\bd(\set{C})$, which is a set with Lebesgue measure zero. Furthermore, it can be shown \cite[Lemma 2.4]{everink2022bayesian} that $\Pi^{\Sigma^{-1}}_{\set{C}}(\vec{x}^\star)$ assigns positive probability to $\bd(\set{C})$ if $\set{C} \neq \reals^n$, thus $\Pi^{\Sigma^{-1}}_{\set{C}}(\vec{x}^\star)$ does not have a density with respect to the Lebesgue measure and is therefore not a continuous random variable. The following proposition gives a condition on the smoothness of $f$ such that the regularized Gaussian $\prox_{f}^{\Sigma^{-1}}(\vec{x}^\star)$ is a continuous random variable.

\begin{proposition}\label{prop:lipschitz_density}
Under Assumption \ref{as:gaussian}, let $f : \reals^n \rightarrow \reals \cup \{\infty\}$ with $\dom(f)$ open. If $f$ is an everywhere differentiable convex function on $\dom(f)$ with locally Lipschitz continuous gradient, then  $\prox_{f}^{\Sigma^{-1}}(\vec{x}^\star)$ is a continuous distribution on $\dom(f)$.
\end{proposition}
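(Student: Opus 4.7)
The plan is to exploit that, under the stated smoothness assumptions, the proximal operator is a global continuous inverse of a locally Lipschitz map, whence it pushes any absolutely continuous measure forward to another absolutely continuous measure.

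First I would rewrite the proximal operator as an inverse map. Since $f$ is differentiable on the open set $\dom(f)$, its subdifferential there is $\{\nabla f(\vec{z})\}$, and the optimality condition for \eqref{eq:regularized_Gaussian} reads: $\vec{z} = \prox_f^{\Sigma^{-1}}(\vec{x})$ if and only if $\vec{z}\in\dom(f)$ and
\begin{equation*}
\vec{x} = T(\vec{z}) := \vec{z} + \Sigma \nabla f(\vec{z}).
\end{equation*}
Using that $\range(\prox_f^{\Sigma^{-1}}) = \dom(f)$ (the fact invoked in the proof of Proposition \ref{prop:regularized_support}), this shows that $T : \dom(f) \to \reals^n$ is a bijection whose inverse is precisely $\prox_f^{\Sigma^{-1}}$. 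In particular, $(\prox_f^{\Sigma^{-1}})^{-1}(B) = T(B)$ for every $B \subseteq \dom(f)$.

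Next I would use the smoothness hypothesis. Local Lipschitz continuity of $\nabla f$ on $\dom(f)$ transfers immediately to $T$, so $T$ is locally Lipschitz on the open set $\dom(f)$. A standard geometric measure theory argument (cover a null set by countably many balls on which $T$ has some Lipschitz constant $L_k$, then use that an $L$-Lipschitz map in $\reals^n$ expands Lebesgue outer measure by at most a factor $L^n$) shows that $T$ sends Lebesgue-null sets to Lebesgue-null sets.

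The conclusion now follows directly: let $B \subseteq \dom(f)$ be any Borel set of Lebesgue measure zero. By the previous paragraph $T(B)$ has Lebesgue outer measure zero, and since $\vec{x}^\star$ has a Gaussian density with respect to Lebesgue measure, this forces $\prob(\vec{x}^\star \in T(B)) = 0$, hence $\prob(\prox_f^{\Sigma^{-1}}(\vec{x}^\star) \in B) = 0$. Since $\prox_f^{\Sigma^{-1}}(\vec{x}^\star) \in \dom(f)$ almost surely, this exhibits the distribution as absolutely continuous with respect to Lebesgue measure on $\dom(f)$, which is exactly the claim. The main technical nuisance I expect is that $T(B)$ need not be Borel for Borel $B$; it is, however, analytic and hence universally measurable, which is enough for the probability evaluation above, and the argument can equivalently be phrased purely in terms of outer measures to sidestep the issue.
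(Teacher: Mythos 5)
Your proposal is correct and follows essentially the same route as the paper: identify the inverse of the proximal operator with the single-valued locally Lipschitz map $\vec{z} \mapsto \vec{z} + \Sigma\nabla f(\vec{z})$ via the optimality condition, and then use that locally Lipschitz maps send Lebesgue-null sets to Lebesgue-null sets (the paper outsources this to its Lemma \ref{lemma:locally_Lipschitz_null_sets} and Corollary \ref{lemma:locally_Lipschitz_density}, proved by the same compact-covering argument you sketch). Your added remark on the measurability of $T(B)$ is a small technical refinement the paper does not spell out, but it does not change the argument.
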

\begin{proof}
Note that the optimality condition associated with $\vec{z} = \prox_{f}^{\Sigma^{-1}}(\vec{x})$ equals
\begin{equation}\label{eq:opt_condition}
    \vec{x} \in \vec{z} + \Sigma \partial f(\vec{z}),
\end{equation}
and hence the inverse image of the proximal operator can be expressed as the set-valued map
\begin{equation}\label{eq:inverse_image_prox}
    \left[\prox_{f}^{\Sigma^{-1}}\right]^{-1}(\vec{z}) = \vec{z} + \Sigma\partial f(\vec{z}).
\end{equation}
Thus, by assumption, $\left[\prox_{f}^{\Sigma^{-1}}\right]^{-1}(\vec{z}) = \vec{z} + \Sigma\nabla f(\vec{z})$ is a locally Lipschitz continuous single valued map, hence the claim follows from Corollary \ref{lemma:locally_Lipschitz_density}.
\end{proof}

Proposition \ref{prop:lipschitz_density} shows that if the regularization function $f$ is sufficiently smooth, then the regularized Gaussian has a continuous distribution. It does not consider the case of a regularization function that is differentiable but without a locally Lipschitz continuous gradient\footnote{Proposition \ref{prop:lipschitz_density} can be generalized to differentiable convex functions $f$ such that $\vec{z} \mapsto \vec{z} + \Sigma\nabla f(\vec{z})$ preserves Lebesgue measurability.}. For many non-differentiable functions, we can show the converse. The following theorem gives a condition on functions such that their corresponding regularized distribution is not continuous.

\begin{theorem}\label{thm:low_dimensional}
Under Assumption \ref{as:gaussian}, let $f: \reals^n \rightarrow \reals \cup \{\infty\}$ be a proper, lower semi-continuous convex function.
Furthermore, let $\set{U} \subset \reals^k$ be compact with $s : \set{U} \rightarrow s(\set{U}) \subseteq \reals^n$ a homeomorphism.
Furthermore, let $\set{V} \subset \reals^{n-k}$ be compact with $d : \set{U} \times \set{V} \rightarrow d(\set{U}, \set{V}) \subseteq \reals^n$ continuous and invertible in $\vec{v}$ such that for any $\vec{u} \in \set{U}$,
\begin{equation*}
    d(\vec{u}, \set{V}) \subseteq \partial f(s(\vec{u})).
\end{equation*}
Then,
\begin{equation*}
    \prob \left(\prox_f^{\Sigma^{-1}}(\vec{x}^{\star}) \in s(\set{U})\right) > 0.
\end{equation*}
\end{theorem}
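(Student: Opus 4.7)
The plan is to use equation \eqref{eq:inverse_image_prox} from the proof of Proposition \ref{prop:lipschitz_density}, which gives the explicit preimage formula $[\prox_f^{\Sigma^{-1}}]^{-1}(\vec{z}) = \vec{z} + \Sigma \partial f(\vec{z})$. Combining this with the hypothesis that $d(\vec{u}, \set{V}) \subseteq \partial f(s(\vec{u}))$ for every $\vec{u} \in \set{U}$ yields the inclusion
\begin{equation*}
    \bigl\{ s(\vec{u}) + \Sigma\, d(\vec{u}, \vec{v}) : \vec{u} \in \set{U}, \vec{v} \in \set{V} \bigr\} \,\subseteq\, [\prox_f^{\Sigma^{-1}}]^{-1}(s(\set{U})).
\end{equation*}
Thus it suffices to show that the left-hand side has positive Lebesgue measure in $\reals^n$, since $\vec{x}^\star$ has a strictly positive Gaussian density everywhere.

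Next I would introduce the auxiliary map $\Phi : \set{U} \times \set{V} \to \reals^n$ defined by $\Phi(\vec{u}, \vec{v}) = s(\vec{u}) + \Sigma\, d(\vec{u}, \vec{v})$, which is continuous since $s$ and $d$ are. The crucial step is to verify injectivity. Suppose $\Phi(\vec{u}_1, \vec{v}_1) = \Phi(\vec{u}_2, \vec{v}_2)$. By construction both sides lie in $[\prox_f^{\Sigma^{-1}}]^{-1}(s(\vec{u}_i))$; applying the single-valued operator $\prox_f^{\Sigma^{-1}}$ to the common value forces $s(\vec{u}_1) = s(\vec{u}_2)$, and the homeomorphism property of $s$ then gives $\vec{u}_1 = \vec{u}_2$. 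Cancelling the common term leaves $\Sigma d(\vec{u}_1, \vec{v}_1) = \Sigma d(\vec{u}_1, \vec{v}_2)$; since $\Sigma$ is positive definite and $d$ is invertible in its second argument, this forces $\vec{v}_1 = \vec{v}_2$.

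Now $\Phi$ is a continuous injection from the compact set $\set{U} \times \set{V} \subseteq \reals^k \times \reals^{n-k} \cong \reals^n$ into $\reals^n$, with domain and codomain of equal dimension. Invoking Brouwer's invariance of domain theorem, $\Phi$ maps the (relative) interior of $\set{U} \times \set{V}$ in $\reals^n$ to an open subset of $\reals^n$. That interior is nonempty under the implicit interpretation that $\set{U}$ and $\set{V}$ are genuinely $k$- and $(n-k)$-dimensional compact sets (otherwise the hypothesis concerning $d(\vec{u}, \set{V}) \subseteq \partial f(s(\vec{u}))$ is vacuous in the relevant directions). Consequently $\Phi(\set{U} \times \set{V})$ contains a nonempty open set, hence has positive Lebesgue measure, hence positive Gaussian probability, completing the proof.

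The main obstacle is the injectivity verification, which is what ties the two halves of the construction together: the $\vec{u}$-component of injectivity does not come from any property of $s$ or $d$ in isolation, but rather from the fact that preimages of distinct points under the single-valued proximal operator are disjoint. A secondary technical point is the appeal to invariance of domain, which is what upgrades "injective continuous image of an $n$-dimensional compact set" into "has nonempty interior in $\reals^n$"; without this topological input, a homeomorphic image of a positive-measure compact set need not have positive Lebesgue measure in general.
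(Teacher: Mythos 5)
Your proposal is correct and follows essentially the same route as the paper: identify the preimage of $s(\set{U})$ via the optimality condition, show the map $(\vec{u},\vec{v}) \mapsto s(\vec{u}) + \Sigma d(\vec{u},\vec{v})$ is a continuous injection on the compact $n$-dimensional set $\set{U}\times\set{V}$ using the single-valuedness of the proximal operator for the $\vec{u}$-component, and conclude the image contains an open set of positive Gaussian mass. You are in fact slightly more explicit than the paper in naming invariance of domain and in flagging the implicit requirement that $\set{U}\times\set{V}$ have nonempty interior, both of which the paper leaves tacit.
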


\begin{proof}
Note that the event $\{\prox_f^{\Sigma^{-1}}(\vec{x}^{\star}) \in s(\set{U})\}$ is equivalent to the event $\{\vec{x}^{\star} \in (\text{Id} + \Sigma \partial f)(s(\set{U}))\}$. Because $\support(\vec{x}^\star) = \reals^n$, it is enough to show that the set $(\text{Id} + \Sigma \partial f)(s(\set{U}))$ contains an open set.

By assumption, the set $(\text{Id} + \Sigma \partial f)(s(\set{U}))$ contains the image of the map
\begin{equation*}
    h: \set{U} \times \set{V} \rightarrow \reals^n, \quad \text{defined by }\quad h(\vec{u}, \vec{v}) = s(\vec{u}) + \Sigma d(\vec{u}, \vec{v}).
\end{equation*}

If $\vec{u}, \vec{u}' \in \set{U}$ and $\vec{v}, \vec{v}' \in \set{V}$ such that $h(\vec{u}, \vec{v}) = h(\vec{u}', \vec{v}')$. Then
\begin{equation*}
    s(\vec{u}) = \prox_f^{\Sigma}(h(\vec{u}, \vec{v})) = \prox_f^{\Sigma}(h(\vec{u}', \vec{v}')) = s(\vec{u}'),
\end{equation*}
hence, because $s$ is a homeomorphism, $\vec{u} = \vec{u}'$. Furthermore, because $s(\vec{u}) + \Sigma d(\vec{u}, \vec{v}') = s(\vec{u}) + \Sigma d(\vec{u}, \vec{v}')$ and $d$ is invertible in $\vec{v}$, we get $\vec{v} = \vec{v}'$. Thus, $h$ is a homeomorphism between the $n$-dimensional set $\set{U} \times \set{V}$ and a subset of $(\text{Id} + \Sigma \partial f)(s(\set{U}))$. Therefore, we can conclude that $(\text{Id} + \Sigma \partial f)(s(\set{U}))$ contains an open set.
\end{proof}

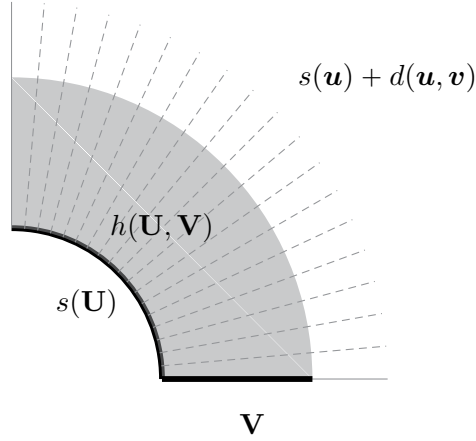
\begin{figure}
    \centering
    \begin{tikzpicture}[scale=2]
        \draw[line width = 0.2em] (1,0) arc (0:90:1);

        \fill[gray, opacity = 0.5] (1,0) arc (0:90:1) -- (0, 2) -- (2, 0) arc (0:90:2) -- (2, 0);
        
        \draw[gray] (1,0) -- (2.5,0);
        \draw[gray] (0,1) -- (0,2.5);
        \foreach \phi in {5,10,...,85}{\draw[gray, densely dashed] ({cos(\phi)},{sin(\phi)}) -- ({2.5*cos(\phi)},{2.5*sin(\phi)});}

        \node[] at (1.6,-0.3) {$\set{V}$};
        \draw[line width = 0.2em] (1,0) -- (2,0);
        
        \node[] at (2.5,2) {$s(\vec{u}) + d(\vec{u}, \vec{v})$};
        \node[] at (0.5,0.5) {$s(\set{U})$};
        \node[] at (1,1) {$h(\set{U}, \set{V})$};
        
    \end{tikzpicture}
    
    \caption{Visualization of Theorem \ref{thm:low_dimensional} for the curved boundary of a quarter disc.}
    \label{fig:projection}
\end{figure}

Theorem \ref{thm:low_dimensional} covers most cases of interest, including most sparsity-imposing regularization functions as will be discussed in Subsection \ref{subsec:sparsity}.
As a separate example, let $\set{C}$ be a quarter disc, i.e., the unit disc intersected with the nonnegative orthant, and consider $f(\vec{x}) = \chi_{\set{C}}(\vec{x})$, i.e., constraining the linear least squares problem to the quarter disc $\set{C}$. The subdifferential of $f$ is the normal cone $ \partial f(\vec{x}) = N_{\set{C}}(\vec{x})$. Consider $\Sigma = I$ for simplicity and let $s(\set{U})$ be the one-dimensional curved part of the boundary as shown in Figure \ref{fig:projection}. The subdifferential of $f$ over $s$ can be parameterized as $d(u, v) = v s(u)$ for $v \geq 0$. All mass from $s(\vec{u}) + \set{N}_{\set{C}}(s(\vec{u}))$, including the subset $h(\set{U}, \set{V})$ obtained by restricting $v$ to $[0, 1]$, gets projected onto $s(\set{U})$, resulting in $\prox_f^{\Sigma^{-1}}(\vec{x}^\star)$ to assign a positive probability to $s(\set{U})$.

\subsection{Sparsity}\label{subsec:sparsity}

Although Theorem \ref{thm:low_dimensional} could apply to any surface, commonly, the focus is put on linear subspaces. Particularly, if we are interested in solutions that lie in a subspace defined by the linear system $D\vec{x} = \vec{d}$, we add as regularization function $\|D\vec{x} - \vec{d}\|$, where the norm is non-differentiable at zero, hence the regularization function is non-differentiable in the specified subspace. The following theorem shows that for these kinds of regularization functions, the regularized Gaussian has positive probability on these subspaces.

\begin{theorem}\label{thm:group_sparsity}
Let $p \geq 1$,  $D_i \in \reals^{c_i \times n}$ and $\vec{d}_i \in \reals^{c_i}$ for $i=1,\dots,k$. Then define a proper continuous convex function $f : \reals^n \rightarrow \reals$ by
\begin{equation}\label{eq:group_sparsity}
    f(\vec{x}):= \sum_{i=1}^{k}\|D_i\vec{x} - \vec{d}_i\|_p.
\end{equation}
For any $\set{I} \subseteq \{1, \dots, k\}$ let $\set{F}_{\set{I}}$ be a nonempty compact subset of $\{\vec{x} \in \reals^n\,|\, D_i\vec{x}-\vec{d}_i = 0, \forall i \in \set{I} \text{ and } D_i\vec{x}-\vec{d}_i \neq 0, \forall i \not\in \set{I}\}$. Then,
\begin{equation*}
    \prob \left(\prox_f^{\Sigma^{-1}}(\vec{x}^{\star}) \in \set{F}_{\set{I}}\right) > 0.
\end{equation*}
\end{theorem}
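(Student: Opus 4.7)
The plan is to verify the hypotheses of Theorem \ref{thm:low_dimensional} with carefully chosen maps: the key observation is that on $\set{F}_{\set{I}}$ the subdifferential of $f$ contains a ``thick'' Minkowski-sum component whose dimension equals the codimension of the affine hull of $\set{S}_{\set{I}}$ in $\reals^n$, which is precisely the structure that Theorem \ref{thm:low_dimensional} is designed to detect. Let $D_{\set{I}} := [D_i]_{i \in \set{I}}$ denote the stacked matrix and set $r := \Rank D_{\set{I}}$. By the subdifferential sum rule, for $\vec{x} \in \set{F}_{\set{I}}$,
\begin{equation*}
\partial f(\vec{x}) \,\supseteq\, \sum_{i \in \set{I}} D_i^T B_{p^*} \,+\, \vec{g}(\vec{x}),
\end{equation*}
where $B_{p^*}$ is the closed unit ball of the dual norm $\|\cdot\|_{p^*}$ and $\vec{g}(\vec{x})$ is any selection from $\sum_{i \notin \set{I}} D_i^T \partial\|\cdot\|_p(D_i\vec{x}-\vec{d}_i)$. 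A standard Minkowski-sum argument shows that $\sum_{i \in \set{I}} D_i^T B_{p^*}$ contains a neighborhood of $\vec{0}$ inside the $r$-dimensional subspace $W := \range D_{\set{I}}^T$.

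Next, I would assemble the maps required by Theorem \ref{thm:low_dimensional}. Since $\set{F}_{\set{I}}$ lies in the $(n-r)$-dimensional affine subspace $\set{S}_{\set{I}} := \{\vec{x} : D_i\vec{x} = \vec{d}_i,\, i \in \set{I}\}$, fix an affine isomorphism $\phi : \reals^{n-r} \to \set{S}_{\set{I}}$, set $\set{U} := \phi^{-1}(\set{F}_{\set{I}})$ (compact in $\reals^{n-r}$), and let $s := \phi|_{\set{U}}$, which is a homeomorphism onto $\set{F}_{\set{I}}$. Pick a linear isomorphism $T : \reals^r \to W$ together with a compact neighborhood $\set{V} \subset \reals^r$ of $\vec{0}$ small enough that $T(\set{V}) \subseteq \sum_{i \in \set{I}} D_i^T B_{p^*}$, and a continuous selection $\vec{g}$ as above on $\set{F}_{\set{I}}$. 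Defining $d(\vec{u}, \vec{v}) := \vec{g}(s(\vec{u})) + T\vec{v}$ yields a map that is continuous, invertible in $\vec{v}$ (via $T$), and satisfies $d(\vec{u}, \set{V}) \subseteq \partial f(s(\vec{u}))$ by the subdifferential inclusion above. Theorem \ref{thm:low_dimensional} then gives $\prob(\prox_f^{\Sigma^{-1}}(\vec{x}^\star) \in s(\set{U})) > 0$, and since $s(\set{U}) = \set{F}_{\set{I}}$ the claim follows.

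The main technical obstacle is the continuity of the selection $\vec{g}$. For $p > 1$ the function $\|\cdot\|_p$ is continuously differentiable away from the origin, so $\vec{g}(\vec{x}) = \sum_{i \notin \set{I}} D_i^T \nabla\|D_i\vec{x}-\vec{d}_i\|_p$ is manifestly continuous on $\set{F}_{\set{I}}$. For $p = 1$, however, $\partial\|\cdot\|_1$ is multi-valued at any vector with a zero coordinate, and the natural sign selection is discontinuous there. I would handle this by restricting the argument to the relatively open, dense subset of $\set{S}_{\set{I}}$ on which no coordinate of $D_i\vec{x}-\vec{d}_i$ vanishes for any $i \notin \set{I}$; since Theorem \ref{thm:low_dimensional} only needs \emph{some} compact $s(\set{U}) \subseteq \set{F}_{\set{I}}$ with the appropriate structure, restricting to a small compact subset of this generic part suffices to produce the required $s(\set{U})$ and the corresponding positive probability.
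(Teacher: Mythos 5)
Your proof is correct and takes essentially the same route as the paper's: both decompose $\partial f$ on $\set{F}_{\set{I}}$ into the constant Minkowski sum $\sum_{i\in\set{I}}D_i^T\partial(\|\cdot\|_p)(\vec{0})$, which fills a relative neighborhood of the origin in the $r$-dimensional subspace $\range(D_{\set{I}}^T)$ orthogonal to the cell, plus the gradient of the inactive terms, and then invoke Theorem~\ref{thm:low_dimensional}; your version is simply more explicit about constructing the maps $s$, $d$, $\set{U}$, $\set{V}$ that the theorem requires. The only divergence is the $p=1$ case: the paper rewrites $\|\cdot\|_1$ as a sum of scalar $l_2$-norms to reduce to $p>1$, whereas you restrict to the generic locus where the inactive $l_1$ terms are differentiable --- the two devices serve the same purpose (securing a continuous selection from the inactive subdifferentials), and yours makes the needed restriction to a sub-cell explicit where the paper leaves it implicit. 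Note that both arguments tacitly assume $\set{F}_{\set{I}}$ has nonempty relative interior in the affine set $\{\vec{x}\,|\,D_i\vec{x}=\vec{d}_i,\ i\in\set{I}\}$, since otherwise $\set{U}\times\set{V}$ is not $n$-dimensional and the conclusion can fail (e.g.\ a singleton inside a positive-dimensional cell); this is a gap in the statement rather than in your argument.
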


\begin{proof}
    Assume first that $p > 1$. For $\vec{x} \in \set{F}_{\set{I}}$, the subdifferential of $f$ can be decomposed as follows. The mapping $\vec{x} \mapsto \|D_i\vec{x} - \vec{d}_i\|_p$ is differentiable for any $i \not\in \set{I}$. For $i \in \set{I}$, $\vec{x} \mapsto \|D_i\vec{x} - \vec{d}_i\|_p$ is non-differentiable with subgradient $D_i^T\partial(\|\cdot\|_p)(\vec{0})$, where $\partial(\|\cdot\|_p)(\vec{0})$ is an $c_i$-dimensional unit $l_p$-norm ball around $\vec{0}$. We can therefore split the subgradient of $f$ into a set that is constant over $\set{F}_{\set{I}}$ and a continuous translation, i.e.,
    \begin{equation}\label{eq:group_sparsity_decomposition}
        \partial f(\vec x) =  \set{E} + \nabla g(\vec{x}),
    \end{equation}
    with 
    \begin{align*}
        \set{E} &= \sum_{i \in \set{I}}D_i^T\partial(\|\cdot\|_p)(\vec{0}) \quad \text{and} \\
        g(\vec{x}) &= \sum_{i \not\in \set{I}}  \|D_i\vec{x} - \vec{d}_i\|_p,
    \end{align*}
    where the former sum is the Minkowski sum.
    Note that $\set{E}$ is orthogonal to $\set{F}_{\set{I}}$ and $\dim(\set{E}) + \dim(\set{F}_{\set{I}}) = n$. Thus, the conclusion follows from Theorem \ref{thm:low_dimensional}.
    
    For the case $p = 1$, note that any scalar $a \in \reals$ satisfies $\|a\|_2 = |a|$. Hence, for any $\vec{z} \in \reals^d$ we have $\|\vec{z}\|_1 = \sum_{j = 1}^{d}\|\vec{z}_j\|_2$, where $\vec{z}_j$ denotes the components of $\vec{z}$.
\end{proof}

Examples of functions of the form \eqref{eq:group_sparsity} include regularization functions like (an)isotropic total variation. However, if the differentiable part $g$ in the decomposition \eqref{eq:group_sparsity_decomposition} is zero, a lot more can said. The following assumption restricts regularization functions to those functions for which the different, possibly low-dimensional, subspaces have a constant sub-differential.

\begin{assumption}\label{as:polyhedral_epigraph}
Let $f : \mathbb{R}^n \rightarrow \mathbb{R} \cup \{\infty\}$ be any proper function whose epigraph is a polyhedral set, or equivalently, $f$ is a convex piecewise linear function \cite[Theorem 2.49]{rockafellar2009variational}. Note that any such function can be written as 
\begin{equation*}
f(\vec{x}) = \chi_{\set{C}}(\vec{x}) + \max_{i\in \set{I}} \{\vec{a}_i^T\vec{x} + b_i\},  
\end{equation*} where $\set{C} \subseteq \mathbb{R}^n$ is a polyhedral set, $\set{I}$ is a finite index set and the pairs $(\vec{a}_i, b_i)$ define affine functions. Note that $\set{C}$ is the effective domain of $f$.
\end{assumption}

Examples of function that satisfy Assumption \ref{as:polyhedral_epigraph} include polyhedral constraints like nonnegativity ($f(\vec{z}) = \chi_{\reals_+^n}(\vec{z})$), and $l_1$-norm based regularization like anisotropic total variation ($f(\vec{z}) = \gamma \|D\vec{z}\|_1$) and certain Besov norms \cite{wang2017bayesian} ($f(\vec{z}) = \gamma \|WB\vec{z}\|_1$). Note that isotropic total variation and more generally group sparsity ($f(\vec{z}) = \sum_i \|D_i\vec{z}\|_2$) is not included in this class.

The following definition gives a partition of the domain of the regularization functions, such that each element corresponds to part of a possibly low-dimensional subspace of interest. 

\begin{definition}\label{def:polyhedral_partition}
Under Assumption \ref{as:polyhedral_epigraph}, we define the polyhedral partition $\{\set{F}_j\}_{j\in \set{J}}$ of $\dom(f)$ as follows. Denote by $\hat{\set{F}}_j \subset \mathbb{R}^{n+1}$ the face of the polyhedral epigraph $\epi(f)$ for which $(\vec{x}, t) \in \hat{\set{F}}_j$ implies $f(\vec{x}) = t$. Denote by $\set{F}_i$ the projection $(\vec{x}, t) \mapsto \vec{x}$ applied to $\relint(\hat{\set{F}}_i)$, where $\relint$ denotes the relative interior.


Equivalently, $\{\set{F}_j\}_{j\in \set{J}}$ is the partition of $\dom(f)$ by equivalence of subdifferentials, i.e., for all $j\in \set{J}$, we have $\vec{x}, \vec{y} \in \set{F}_j$ if and only if $\partial f(\vec{x}) = \partial f(\vec{y})$.
\end{definition}

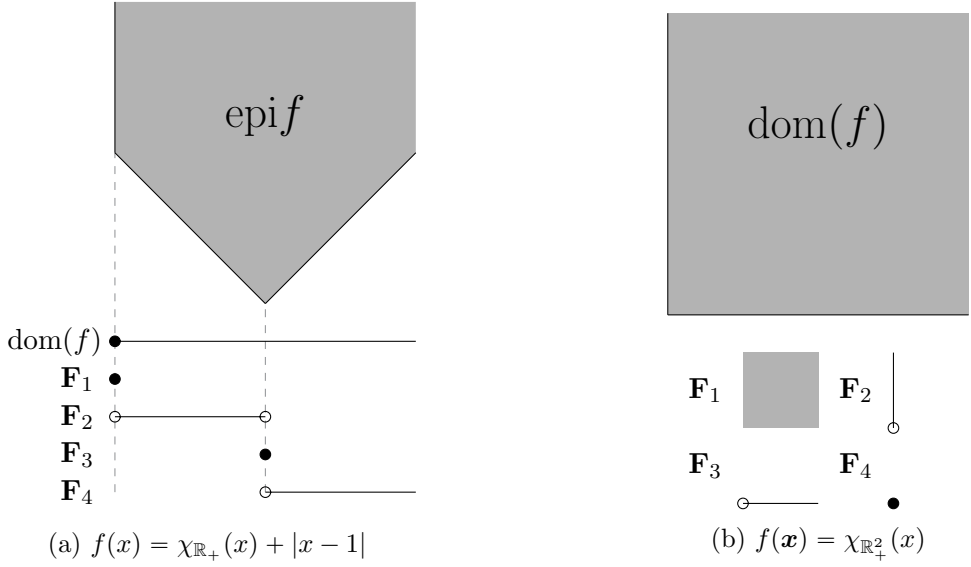
\begin{figure}
    \centering
    \begin{subfigure}[b]{0.48\textwidth}
        \centering
        \begin{tikzpicture}[scale=2]
            \fill[fill = black!30!white] (0,1) -- (1,0) -- (2,1) -- (2,2) -- (0, 2) -- cycle;
        
            \draw[gray, dashed] (0,-1.25) -- (0,1);
            \draw[gray, dashed] (1,-1.25) -- (1,0);
            
            \draw[black] (0,1) -- (0,2);
            \draw[black] (0,1) -- (1,0);
            \draw[black] (1,0) -- (2,1);
            
            \node[] at (1,1.25) {\LARGE $\epi{f}$};
            
            \filldraw (0,-0.25) circle (1pt) -- (2,-0.25);
            \node[] at (-0.4,-0.25) {$\dom (f)$};
            
            \filldraw (0,-0.5) circle (1pt);
            \node[] at (-0.25,-0.5) {$\set{F}_1$};
            
            \draw (0,-0.75)  circle (1pt) -- (1,-0.75)  circle (1pt);
            \node[] at (-0.25,-0.75) {$\set{F}_2$};
            
            \filldraw (1,-1)  circle (1pt);
            \node[] at (-0.25,-1) {$\set{F}_3$};
            
            \draw (1,-1.25)  circle (1pt) -- (2,-1.25);
            \node[] at (-0.25,-1.25) {$\set{F}_4$};
            
        \end{tikzpicture}
        \caption{$f(x) = \chi_{\mathbb{R}_+}(x) + |x-1|$}
        \label{fig:ex_polyhedral_partition_1d}
    \end{subfigure}
    \hfill
    \begin{subfigure}[b]{0.48\textwidth}
        \centering
        \begin{tikzpicture}[scale=2]
        \fill[fill = black!30!white] (0,0) -- (0,2) -- (2,2) -- (2,0) -- cycle;
        \draw[black] (0,0) -- (2,0);
        \draw[black] (0,0) -- (0,2);
    
        \node[] at (1,1.25) {\LARGE $\dom (f)$};
        
        \fill[fill = black!30!white] (0.5+0,-0.75) -- (0.5+0.5,-0.75) -- (0.5+0.5,-0.25) -- (0.5+0, - 0.25) -- cycle;
        \node[] at (0.5+-0.25,-0.5) {$\set{F}_1$};
        
        \draw (0.5+1,-0.75)  circle (1pt) -- (0.5+1,-0.25);
        \node[] at (0.5+0.75,-0.5) {$\set{F}_2$};
                
        \draw (0.5+0,-1.25)  circle (1pt) -- (0.5+0.5,-1.25);
        \node[] at (0.5+-0.25,-1) {$\set{F}_3$};
    
        \filldraw (0.5+1,-1.25)  circle (1pt);
        \node[] at (0.5+0.75,-1) {$\set{F}_4$};

        \end{tikzpicture}
        \caption{$f(\vec{x}) = \chi_{\mathbb{R}_+^2}(x)$}
        \label{fig:ex_polyhedral_partition_2d}
    \end{subfigure}
    \caption{Examples of polyhedral partitions in one and two dimensions.}
    \label{fig:ex_polyhedral_partition}
\end{figure}

Figure \ref{fig:ex_polyhedral_partition} illustrates this polyhedral partition. Note that in some cases, the partition contains many sets of low dimension. We will show that the regularized Gaussian assigns positive probability to each set of the polyhedral partition. Furthermore, the regularized Gaussian can be described by a mixture of possibly low-dimensional densities on each set and these densities are proportional to the original Gaussian up to translation. The following lemma describes an orthogonality property which holds due to the translation component $\nabla g$ in \eqref{eq:group_sparsity_decomposition} being zero. 

\begin{lemma}\label{lemma:constant_subdifferential}
Consider the polyhedral partition $\{\set{F}_j\}_{j\in \set{J}}$ of Definition \ref{def:polyhedral_partition}. The subdifferential $\partial f(\vec{x})$ for $ \vec{x} \in \set{F}_i$ does not depend on $\vec{x}$, hence will be denoted by $\partial f_{\set{F}_j}$. Furthermore, parameterize $\set{F}_j$ as $\vec{x}_j + F_j\vec{u}$ for fixed $\vec{x}_j \in \set{F}_j$, $F_j \in \mathbb{R}^{n\times \dim(\set{F}_j)}$, and local coordinates $\vec{u} \in \set{U} \subseteq \mathbb{R}^{\dim(\set{F}_j)}$. Then, there exists $\vec{d} \in \{\vec{0}, \vec{a}_1,\dots, \vec{a}_{|\set{I}|} \}$ such that $(\partial f_{\set{F}_j} - \vec{d})^TF_j\vec{u} = \{0\}$.
\end{lemma}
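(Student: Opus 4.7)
The first assertion---that $\partial f$ is constant on each $\set{F}_j$---is immediate from the equivalent characterization of the polyhedral partition stated in Definition \ref{def:polyhedral_partition}. The plan for the second assertion is to decompose $\partial f$ via the sum rule: for $\vec{x} \in \set{F}_j \subseteq \set{C}$,
\begin{equation*}
    \partial f(\vec{x}) = N_{\set{C}}(\vec{x}) + \conv\{\vec{a}_i : i \in \set{I}^*(\vec{x})\},
\end{equation*}
where $\set{I}^*(\vec{x}) = \{i \in \set{I} : \vec{a}_i^T \vec{x} + b_i = \max_{i' \in \set{I}}(\vec{a}_{i'}^T\vec{x} + b_{i'})\}$. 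Using the face-of-epigraph interpretation of the partition, on the relative interior of $\hat{\set{F}}_j$ the same facet constraints of $\set{C}$ are tight and the same affine pieces attain the maximum, so both $N_{\set{C}}(\vec{x})$ and $\set{I}^*(\vec{x})$ are constant over $\vec{x} \in \set{F}_j$; call them $N_j$ and $\set{I}^*_j$.

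Next I would verify two orthogonality properties that together give the claim. Since $\set{F}_j$ lies in a single face of $\set{C}$ in whose relative interior the normal cone is $N_j$, the tangent space of that face---which contains the column span of $F_j$---is orthogonal to every element of $N_j$. Similarly, fixing any $i_0 \in \set{I}^*_j$, every $i \in \set{I}^*_j$ satisfies $\vec{a}_i^T\vec{x} + b_i = \vec{a}_{i_0}^T\vec{x} + b_{i_0}$ throughout $\set{F}_j$; differencing two points of $\set{F}_j$ and writing their difference as $F_j\vec{u}$ yields $(\vec{a}_i - \vec{a}_{i_0})^T F_j \vec{u} = 0$ for every admissible $\vec{u}$ and every $i \in \set{I}^*_j$.

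With these two facts in hand, set $\vec{d} = \vec{a}_{i_0}$ (or $\vec{d} = \vec{0}$ in the degenerate case where $\set{I}$ is empty so that the max term disappears and $\partial f = N_{\set{C}}$). Any $\vec{s} \in \partial f_{\set{F}_j}$ decomposes as $\vec{s} = \vec{n} + \sum_{i \in \set{I}^*_j} \lambda_i \vec{a}_i$ with $\vec{n} \in N_j$, $\lambda_i \geq 0$ and $\sum_i \lambda_i = 1$, so
\begin{equation*}
    \vec{s} - \vec{d} = \vec{n} + \sum_{i \in \set{I}^*_j}\lambda_i(\vec{a}_i - \vec{a}_{i_0}).
\end{equation*}
Taking inner product with any $F_j\vec{u}$, each summand vanishes by the previous step, so $(\partial f_{\set{F}_j} - \vec{d})^T F_j\vec{u} = \{0\}$, as desired.

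The main obstacle is the bookkeeping between the two descriptions of the partition: one must confirm, using the face-of-epigraph definition, that both the active normal cone of $\set{C}$ and the active set of affine pieces are genuinely constant on $\set{F}_j$, and that $\Span(F_j)$ coincides with the difference space $\{\vec{x} - \vec{x}' : \vec{x}, \vec{x}' \in \set{F}_j\}$ and lies inside the tangent space of the containing face of $\set{C}$. Once those identifications are in place, the sum-rule decomposition makes the orthogonality computation essentially mechanical.
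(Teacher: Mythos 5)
Your proposal is correct and follows essentially the same route as the paper: the same sum-rule decomposition $\partial f = N_{\set{C}} + \conv\{\vec{a}_i : i \in \hat{\set{I}}\}$, the same differencing argument giving $(\vec{a}_i - \vec{a}_{i_0})^T F_j\vec{u} = 0$, the same orthogonality of the normal cone to $\Span(F_j)$, and the same choice $\vec{d} = \vec{a}_{i_0}$. The only cosmetic difference is that you obtain constancy of the subdifferential directly from the equivalence-of-subdifferentials characterization in Definition~\ref{def:polyhedral_partition}, whereas the paper reads it off from the explicit decomposition.
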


\begin{proof}
For each $\set{F}_j$, there exists a largest, non-empty set of active affine functions $\hat{\set{I}} \subseteq \set{I}$, such that $f|_{\set{F}_j}(\vec{x}) = \vec{a}_i^T\vec{x} + b_i$ for all $i \in \hat{\set{I}}$. 
For the first part, note that for $\vec{x} \in \set{F}_j$, $\partial f(\vec{x}) = N_\set{\dom(f)}(\vec{x}) + \conv\{\vec{a}_i\,|\, i \in \hat{\set{I}}\}$, where $\conv\{\set{S}\}$ denotes the convex hull of $\set{S}$. Hence, $\partial f(\vec{x})$ is independent of $\vec{x} \in \set{F}_j$.

For the second part, note that for any $j, k\in\hat{\set{I}}$ and $\vec{u} \in \set{U}$,
\begin{align*}
    \vec{a}_j^T(\vec{x}_j + F_j\vec{u}) + b_j &= \vec{a}_k^T(\vec{x}_j + F_j\vec{u}) + b_k \quad \text{and}\\
    \vec{a}_j^T\vec{x}_j + b_j &= \vec{a}_k^T\vec{x}_j + b_k.
\end{align*}
From these equations we obtain
\begin{equation*}
    0 = (\vec{a}^T_j\vec{x}_j + b_j) - (\vec{a}^T_k\vec{x}_j + b_k) + (\vec{a}_j - \vec{a}_k)^TF_j\vec{u} = (\vec{a}_j - \vec{a}_k)^TF_j\vec{u}.
\end{equation*}
Therefore, for any $\vec{v} \in \conv\{\vec{a}_j\,|\, j \in \hat{\set{I}}\}$ we have $(\vec{v} - \vec{a}_k)^TF_j\vec{u} = 0$.
Furthermore, for any $\vec{n} \in N_\set{\dom(f)}(\vec{x}_j + F_j\vec{u})$ we have $\vec{n}^TF_j\vec{u} = 0$.
Thus, we can conclude that for any $\vec{v} \in \partial f(\vec{x}) = N_{\dom(f)}(\vec{x}) + \conv\{\vec{a}_i\,|\, i \in \hat{\set{I}}\}$, we have $(\vec{v} - \vec{a}_k)^TF_j\vec{u} = 0$.

\end{proof}

\begin{theorem}\label{thm:gauss_decomposition}
Under Assumptions \ref{as:gaussian} and \ref{as:polyhedral_epigraph}. Consider the polyhedral partition $\{\set{F}_j\}_{j\in \set{J}}$ of Definition \ref{def:polyhedral_partition}. The probability of $\prox_f^{\Sigma^{-1}}(\vec{x}^\star)$ conditioned on $\set{F}_j$ can be described by a density $\pi_{\set{F}_j}(\vec{x})$ on $\set{F}_j$ that is proportional to a Gaussian with covariance $\Sigma$.
Furthermore, we have
\begin{equation*}
    \pi_{\set{F}_i}(\vec{x}) \,\propto\, \exp\left(-\frac12\|\vec{x}-\vec{\mu}\|_{\Sigma^{-1}}^2 - f(\vec{x})\right).
\end{equation*}
\end{theorem}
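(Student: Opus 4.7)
The plan is to compute the conditional density of $\prox_f^{\Sigma^{-1}}(\vec{x}^\star)$ on $\set{F}_j$ directly, using the inverse-image formula \eqref{eq:inverse_image_prox} together with a change of variables and marginalization. By Lemma \ref{lemma:constant_subdifferential} the subdifferential is constant on $\set{F}_j$, equal to $\partial f_{\set{F}_j}$, so that
\begin{equation*}
\left[\prox_f^{\Sigma^{-1}}\right]^{-1}(\set{F}_j) \;=\; \set{F}_j + \Sigma\, \partial f_{\set{F}_j}.
\end{equation*}
Using the parameterizations $\vec{x} = \vec{x}_j + F_j \vec{u}$ of $\set{F}_j$ and $\vec{g} = \vec{d} + E_j\vec{v}$ of $\partial f_{\set{F}_j}$, where the columns of $E_j$ span the affine hull of $\partial f_{\set{F}_j} - \vec{d}$, the inverse image is the image of the affine map $(\vec{u},\vec{v}) \mapsto \vec{x}_j + \Sigma\vec{d} + F_j\vec{u} + \Sigma E_j\vec{v}$. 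I would pull back the Gaussian density of $\vec{x}^\star$ through this map and marginalize over $\vec{v}$ to obtain the density on $\set{F}_j$.

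The next step is to expand the quadratic form
\begin{equation*}
\|\vec{x} + \Sigma(\vec{d} + E_j\vec{v}) - \vec{\mu}\|^2_{\Sigma^{-1}}
= \|\vec{x} - \vec{\mu}\|^2_{\Sigma^{-1}} + 2(\vec{x} - \vec{\mu})^T(\vec{d} + E_j\vec{v}) + (\vec{d} + E_j\vec{v})^T\Sigma(\vec{d} + E_j\vec{v}),
\end{equation*}
with $\vec{x} = \vec{x}_j + F_j\vec{u}$. The key observation is that Lemma \ref{lemma:constant_subdifferential} implies the orthogonality $F_j^T E_j = 0$, which collapses the cross term to $(\vec{x}_j-\vec{\mu})^T E_j \vec{v}$, independent of $\vec{u}$ (hence of $\vec{x}$). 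Consequently, every factor involving $\vec{v}$ integrates to a constant in $\vec{x}$, and what remains is
\begin{equation*}
\pi_{\set{F}_j}(\vec{x}) \;\propto\; \exp\!\left(-\tfrac{1}{2}\|\vec{x}-\vec{\mu}\|^2_{\Sigma^{-1}} - \vec{d}^T(\vec{x}-\vec{\mu})\right).
\end{equation*}
Completing the square in the exponent rewrites this as $\exp(-\tfrac{1}{2}\|\vec{x}-(\vec{\mu}-\Sigma\vec{d})\|^2_{\Sigma^{-1}})$ up to a constant, proving the first half of the theorem: the conditional law is a Gaussian of covariance $\Sigma$ (with shifted mean) restricted to $\set{F}_j$.

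To recover the explicit formula in terms of $f(\vec{x})$, I would use that convexity and piecewise linearity of $f$, combined with the constant subdifferential on $\set{F}_j$, force $f$ to be affine on $\set{F}_j$ with slope $\vec{d}$. Indeed, the condition $F_j^T(\vec{g}-\vec{d}) = 0$ from Lemma \ref{lemma:constant_subdifferential} identifies $F_j^T\vec{d}$ as the (well-defined) gradient of $f$ in the $\vec{u}$-coordinates, so $f(\vec{x}) = \vec{d}^T\vec{x} + c_j$ on $\set{F}_j$ for some constant $c_j$. Absorbing $c_j$ into the normalization converts the above into the claimed $\pi_{\set{F}_j}(\vec{x}) \propto \exp(-\tfrac{1}{2}\|\vec{x}-\vec{\mu}\|^2_{\Sigma^{-1}} - f(\vec{x}))$.

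The main obstacle is the measure-theoretic bookkeeping when $\dim(\set{F}_j) < n$. Densities on $\set{F}_j$ must be interpreted with respect to the appropriate Hausdorff measure, the Jacobian of the affine map $(\vec{u},\vec{v})\mapsto \vec{x}^\star$ must be shown to be a nonzero constant (so it only affects normalization), and one must verify that $\dim(\set{F}_j) + \dim(\partial f_{\set{F}_j}) = n$ so that the map is locally invertible onto an $n$-dimensional region of $\reals^n$. These are combinatorial facts about the polyhedral partition of Definition \ref{def:polyhedral_partition} that, while standard, require some care to state and use correctly.
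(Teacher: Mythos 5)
Your proposal is correct and follows essentially the same route as the paper's proof: both condition on $\set{F}_j$, write the inverse image under the proximal operator as the Minkowski sum $\set{F}_j + \Sigma\,\partial f_{\set{F}_j}$, marginalize over the subdifferential directions, invoke the orthogonality from Lemma \ref{lemma:constant_subdifferential} to show the marginal contributes only a constant, and finish by identifying the affine restriction $f|_{\set{F}_j}(\vec{x}) = \vec{d}^T\vec{x} + c_j$. The only differences are presentational (you expand the quadratic form explicitly and parameterize $\partial f_{\set{F}_j}$ via $E_j\vec{v}$, whereas the paper computes a log-ratio of densities), and you are somewhat more explicit than the paper about the Jacobian and Hausdorff-measure bookkeeping, which the paper leaves implicit.
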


\begin{proof}
The optimality condition of \eqref{eq:regularized_Gaussian} is $\vec{x}^\star \in \vec{z}^\star + \Sigma \partial f(\vec{z}^\star)$. For $\vec{z}^\star \in \set{F}_j$, this simplifies to $\vec{x}^\star \in \vec{z}^\star + \Sigma \partial f_{\set{F}_j}$. Let $\set{E} \subseteq \set{F}_j$ be measurable, then we can write
\begin{align*}
    \prob\left(\prox_f^{\Sigma^{-1}}(\vec{x}^\star) \in \set{E}\right) &= \int_{\set{E} + \Sigma \partial f_{\set{F}_j}} \pi_{\vec{x}^\star}(\vec{w})\text{d}\vec{w}\\
    &= \int_{\set{E}}\int_{\Sigma \partial f_{\set{F}_j}} \pi_{\vec{x}^\star}(\vec{z} + \vec{v})\text{d}\vec{z}\text{d}\vec{v}
\end{align*}
where $\pi_{\vec{x}^\star}$ is the density of $\vec{x}^\star \sim \mathcal{N}(\vec{\mu}, \Sigma)$. Hence, we can write the probability in terms of a density $\pi_{\set{F}_j}$ like
\begin{equation*}
    \prob\left(\prox_f^{\Sigma^{-1}}(\vec{x}^\star) \in \set{E}\right) = \int_{\set{E}}\pi_{\set{F}_j}(\vec{z})\text{d}\vec{z},
\end{equation*}
with $\pi_{\set{F}_j}(\vec{z}) = \int_{\Sigma \partial f_{\set{F}_j}} \pi_{\vec{x}^\star}(\vec{z} + \vec{w})\text{d}\vec{w}$.

Letting $\vec{z} = \vec{x}_j + F_j\vec{u}$, we can write
\begin{equation*}
\pi_{\set{F}_j}(\vec{u})\ \propto \int_{\partial f_{\set{F}_j}} \pi_{\vec{x}^\star}(\vec{x}_j + F_j\vec{u} + \Sigma \vec{v}) \text{d} \vec{v} = \int_{\partial f_{\set{F}_j} - \vec{d}} \pi_{\vec{x}^\star}(\vec{x}_j + F_j\vec{u} + \Sigma \vec{d}  + \Sigma \vec{v}) \text{d}\vec{v}.
\end{equation*}
The orthogonality of $F_j\vec{u}$ and $\partial f_{\set{F}_j} - \vec{d}$ from Lemma \ref{lemma:constant_subdifferential} implies that for any $\vec{v} \in \partial f_{\set{F}_j} - \vec{d}$,
\begin{align*}
    -2\log\left(\frac{\pi_{\vec{x}^\star}(\vec{x}_j + F_j\vec{u} + \Sigma \vec{d}  + \Sigma \vec{v})}{\pi_{\vec{x}^\star}(\vec{x}_j + F_j\vec{u} + \Sigma \vec{d})}\right) &= 2(\vec{x}_j + F_j\vec{u} + \Sigma \vec{d})^T\vec{v} + \vec{v}^T\Sigma \vec{v} \\
    &= 2(\vec{x}_j + \Sigma \vec{d})^T\vec{v} + \vec{v}^T\Sigma \vec{v},
\end{align*}
hence independent of $\vec{u}$, from which we can be conclude that
\begin{equation*}
\pi_{\set{F}_j}(\vec{u})\ \propto\ \pi_{\vec{x}^\star}(\vec{x}_j + F_j\vec{u} + \Sigma \vec{d}),
\end{equation*}
where $\pi_{\vec{x}^\star}(\cdot + \Sigma \vec{d})$ is the density of a normal distribution with mean $\vec{\mu} - \Sigma \vec{d}$ and covariance $\Sigma$.

For the second part, note that
\begin{equation*}
    \frac12\|\vec{x} - \vec{\mu} + \Sigma \vec{d}\|_{\Sigma^{-1}}^2 = \frac12\|\vec{x} - \vec{\mu}\|_{\Sigma^{-1}}^2 + \vec{d}^T\vec{x} - \vec{d}^T\vec{\mu} + \frac12\|\Sigma\vec{d}\|_{\Sigma^{-1}}^2.
\end{equation*}
Only the first two terms depend on $\vec{x}$. Furthermore, by Lemma \ref{lemma:constant_subdifferential} we have $f|_{\set{F}_i}(\vec{x}) = \vec{d}^T\vec{x} + b_j$, and hence for any $\vec{x} \in \set{F}_i$,
\begin{equation*}
    \frac12\|\vec{x} - \vec{\mu}\|_{\Sigma^{-1}}^2 + \vec{d}^T\vec{x} = \frac12\|\vec{x} - \vec{\mu}\|_{\Sigma^{-1}}^2 + f(\vec{x}).
\end{equation*}
Thus, we can conclude that
\begin{equation*}
    \pi_{\set{F}_j}(\vec{x})\,\propto\, \exp\left(-\frac12\|\vec{x} - \vec{\mu} + \Sigma \vec{d}\|_{\Sigma^{-1}}^2\right) \,\propto\, \exp\left(-\frac12\|\vec{x}-\vec{\mu}\|_{\Sigma^{-1}}^2 - f(\vec{x})\right).
\end{equation*}
\end{proof}

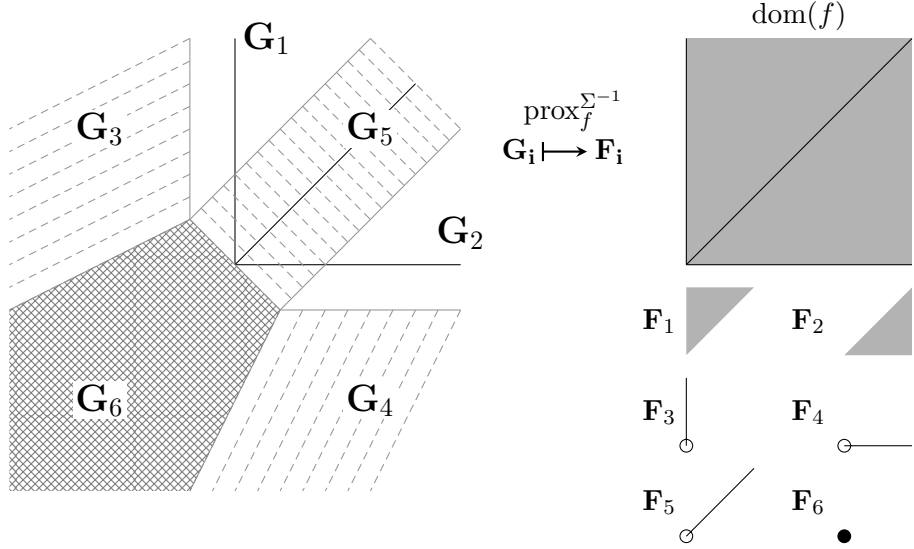
\begin{figure}[H]
    \centering
    \begin{tikzpicture}[scale=0.6]
        \draw[black] (0,0) -- (5,0);
        \draw[black] (0,0) -- (0,5);
        \draw[black] (0,0) -- (4,4);
        
        \draw[gray] (-1,1) -- (1,-1);
        \draw[gray] (-1,1) -- (-5,-1);
        \draw[gray] (1,-1) -- (-1,-5);
        \draw[gray] (-1,1) -- (-1,5);
        \draw[gray] (1,-1) -- (5,-1);
        \draw[gray] (-1,1) -- (3,5);
        \draw[gray] (1,-1) -- (5,3);
        
        \foreach \x in {0, 0.5,...,4}{\draw[gray, densely dashed] ({1+\x},-1) -- ({-1 + \x},-5);}
        \foreach \y in {0, 0.5,...,4}{\draw[gray, densely dashed] (-1,{1+\y}) -- (-5,{-1+\y});}
        \foreach \d in {0, 0.25,...,4}{\draw[gray, densely dashed] ({-1+\d},{1+\d}) -- ({1+\d},{-1+\d});}
        
        \fill[pattern=crosshatch, pattern color=gray] (-1,-5) -- (1,-1) -- (-1,1) -- (-5, -1) -- (-5, -5);  
        
        
        \node[] at (7.5,3.4) {$\prox_f^{\Sigma^{-1}}$};
        \draw [thick, |-stealth](6.8,2.5) -- (7.8,2.5);
        
        \node[] at (6.3,2.5) {$\set{G_i}$};
        \node[] at (8.3,2.5) {$\set{F_i}$};
        
        \node [fill=white,inner sep=1pt] at (-3,-3) {\Large $\set{G}_6$};
        \node [fill=white,inner sep=1pt] at (-3,3) {\Large $\set{G}_3$};
        \node [fill=white,inner sep=1pt] at (3,-3) {\Large $\set{G}_4$};
        \node [fill=white,inner sep=1pt] at (3,3) {\Large $\set{G}_5$};
        \node [fill=white,inner sep=1pt] at (0.7,5) {\Large $\set{G}_1$};
        \node [fill=white,inner sep=1pt] at (5,0.7) {\Large $\set{G}_2$};
        
        
        \fill[fill = black!30!white] (10,0) -- (15,0) -- (15,5) -- (10,5) -- cycle;
        \draw[black] (10,0) -- (10,5);
        \draw[black] (10,0) -- (15,5);
        \draw[black] (10,0) -- (15,0);
    
        \node[] at (12.5,5.5) {$\dom (f)$};
        
        \fill[fill = black!30!white] (10,-2) -- (11.5,-0.5) -- (10,-0.5) -- cycle;
        \node[] at (9.4,-1.25) {$\set{F}_1$};
        
        \fill[fill = black!30!white] (13.5,-2) -- (15,-0.5) -- (15,-2) -- cycle;
        \node[] at (12.7,-1.25) {$\set{F}_2$};
        
        \draw (10,-4)  circle (4pt) -- (10,-2.5);
        \node[] at (9.4,-3.25) {$\set{F}_3$};
                
        \draw (13.5,-4)  circle (4pt) -- (15,-4);
        \node[] at (12.7,-3.25) {$\set{F}_4$};
    
        \draw (10,-6)  circle (4pt) -- (11.5,-4.5);
        \node[] at (9.4,-5.25) {$\set{F}_5$};
        
        \filldraw (13.5,-6)  circle (4pt);
        \node[] at (12.7,-5.25) {$\set{F}_6$};
        
    \end{tikzpicture}
    
    \caption{Visualisation of nonnegative total variation regularization. The oblique proximal maps the sets $\set{G}_i$ to the corresponding sets $\set{F}_i$.}
    \label{fig:NNTV-penalization}
\end{figure}

Figure \ref{fig:NNTV-penalization} illustrates how the density $\pi_{\set{F}_j}$ is obtained for nonnegative anisotropic total variation in two dimensions, i.e., $f(\vec{x}) = \gamma|x_1 - x_2| + \chi_{\reals^2_+}$. All the mass contained in the $k$-dimensional set $\set{G}_i := \left[\prox_f^{\Sigma^{-1}}\right]^{-1}(\set{F}_i) = (\text{Id} + \Sigma \partial f)(\set{F}_i)$ gets mapped to the $(n-k)$-dimensional set $\set{F}_i$. For the one-dimensional sets $\set{F}_3$, $\set{F}_4$ and $\set{F}_5$, the sets $\Sigma \partial f$ are illustrated by the parallel dashed lines in the corresponding sets $\set{G}_3$, $\set{G}_4$ and $\set{G}_5$. For the zero-dimensional origin $\set{F}_6$, the subdifferential corresponds to the cross-hatched area $\set{G}_6$.

\section{Application to Bayesian linear inverse problems}\label{sec:inverse_problems}
In this section, we will discuss how to apply the concept of a regularized Gaussian and the theory discussed in Section \ref{sec:regularized_distributions} to linear inverse problems. At the beginning of Subsection \ref{subsec:general_theory}, we transformed the randomized regularized linear least squares problem
\begin{equation*}
    \argmin_{\vec{x} \in \reals^n} \left\{\frac{\lambda}{2}\|A\vec{x} - \hat{\vec{b}}\|_2^2 + f(\vec{x})\right\},
\end{equation*}
with $A \in \reals^{m\times n}$ and $\hat{\vec{b}} \sim \mathcal{N}(\vec{b}, \lambda^{-1}I)$ into the proximal framework of Definition \ref{def:regularized_Gaussian}. This transformation required the assumption that $A$ has rank $n$. However, inverse problems often have less data than variables, in which case this transformation cannot be applied. This problem can be addressed by introducing a second randomized linear least squares term, resulting in a problem of the form
\begin{equation}\label{eq:perturbed_regularized_linear_least_squares_2} \argmin_{\vec{x}\in \reals^n}\left\{\frac{\lambda}{2} \|A\vec{x} - \hat{\vec{b}}\|^2_2 + \frac{\delta}{2} \|L\vec{x} - \hat{\vec{c}}\|^2_2 + f(\vec{x})\right\},
\end{equation}
with $\hat{\vec{b}} \sim \mathcal{N}(\vec{b}, \lambda^{-1}I), \hat{\vec{c}} \sim \mathcal{N}(\vec{c}, \delta^{-1}I)$ and assuming $\nullspace(A) \cap \nullspace(L) = \{\vec{0}\}$.
Because \eqref{eq:perturbed_regularized_linear_least_squares_2} is equivalent to
\begin{equation}\label{eq:big_proximal}
    \prox_f^{\Sigma^{-1}}(\vec{x}^\star), \quad \text{with } \Sigma^{-1} = \delta L^TL + \lambda A^TA \text{ and } \vec{x}^\star \sim \mathcal{N}(\Sigma(\lambda A^T\vec{b} + \delta L^T \vec{c}), \Sigma),
\end{equation}
this problem fits into the framework and theory discussed in Section \ref{sec:regularized_distributions}.

In Subsection \ref{subsec:overdetermined_random}, we discuss this setting where the framework of Section \ref{sec:regularized_distributions} applies from a Bayesian viewpoint.
In subsection \ref{subsec:underdetermined_random}, we discuss the setting where the randomized linear least squares term is underdetermined.

\subsection{Overdetermined randomization}\label{subsec:overdetermined_random}

Let us now return to the reconstruction of a signal $\vec{x} \in \reals^n$ from noisy measurements $\vec{b} = A\vec{x} + \vec{e}$, with linear forward operator $A: \reals^n \rightarrow \reals^m$ and independent Gaussian noise $\vec{e} \sim \mathcal{N}(0, \lambda^{-1}I)$. If we assume a priori that
\begin{equation}
    \pi(\vec{x}) \,\propto\, \exp\left(-\frac{\delta}{2}\|L\vec{x} - \vec{c}\|^2_2\right),
\end{equation}
then we can compute the posterior up to scaling, i.e.,
\begin{equation*}
    \pi(\vec{x}\,|\, \vec{b}) \,\propto\, \pi(\vec{b}\,|\, \vec{x})\pi(\vec{x}) \,\propto\, \exp\left(-\frac{\lambda}{2}\|A\vec{x} - \vec{b}\|^2_2-\frac{\delta}{2}\|L\vec{x} - \vec{c}\|^2_2\right).
\end{equation*}

We can now define a regularized posterior $\vec{x}^{\star}$ using the regularized and randomized linear least squares problem \begin{equation*}
    \vec{x}^{\star} = \argmin_{\vec{x}\in \reals^n}\left\{\frac{\lambda}{2} \|A\vec{x} - \hat{\vec{b}}\|^2_2 + \frac{\delta}{2} \|L\vec{x} - \hat{\vec{c}}\|^2_2 + f(\vec{x})\right\}.
\end{equation*}

Under Assumption \ref{as:polyhedral_epigraph}, we can conclude from Theorem \ref{thm:gauss_decomposition}, that the distribution of this regularized posterior conditioned on a set $\set{F}_i$ in the polyhedral partition from Definition \ref{def:polyhedral_partition} satisfies
\begin{equation}
    \pi(\vec{x}\,|\, \set{F}_i, \vec{b}) \,\propto\, \exp\left(-\frac{\lambda}{2}\|A\vec{x} - \vec{b}\|^2_2-\frac{\delta}{2}\|L\vec{x} - \vec{c}\|^2_2 - f(\vec{x})\right) \,\propto\,\pi(\vec{b}\,|\, \vec{x})\pi(\vec{x})\exp(-f(\vec{x})).
\end{equation}

Note that $\pi(\vec{b}\,|\,\vec{x}) = \pi(\vec{b}\,|\,\vec{x}, \set{F}_i)$ for $\vec{x} \in \set{F}_i$, then we can compute the conditional prior corresponding to the regularized posterior, precisely,
\begin{equation}\label{eq:conditional_prior}
    \pi(\vec{x}\,|\,\set{F}_i) \,\propto\, \frac{\pi(\vec{x}\,|\,\set{F}_i, \vec{b})}{\pi(\vec{b}\,|\,\vec{x}, \set{F}_i)} 
    \,\propto\, \pi(\vec{x})\exp(-f(\vec{x})) \,\propto\,
    \exp\left(-\frac{\delta}{2}\|L\vec{x} - \vec{c}\|^2_2 - f(\vec{x})\right).
\end{equation}
Thus, conditioned on the set $\set{F}_i$, the prior is the same as if we would have chosen the prior $\exp\left(-\frac{\delta}{2}\|L\vec{x} - \vec{c}\|^2_2 - f(\vec{x})\right)$ on the whole of $\reals^n$. However, the regularized posterior assigns positive probability to sparse sets. Furthermore, this shows that the regularized posterior corresponds to imposing a variable dimension model as prior on $\vec{x}$, where the models are associated with the sets $\set{F}_i$. Our method therefore implicitly defines the model prior $\pi(\set{F}_i)$, which generally depends on the forward operator and data.

\subsection{Underdetermined randomization}\label{subsec:underdetermined_random}
In this subsection, we consider the setting where the linear inverse problem is under-determined and we do not add an explicit prior, i.e., we consider optimization problems of the form
\begin{equation}\label{eq:underdetermined}
    \argmin_{\vec{x}\in \reals^n}\left\{\frac{\lambda}{2}\|A\vec{x} - \hat{\vec{b}}\|^2_2 + f(\vec{x})\right\},
\end{equation}
where $A \in \reals^{m\times n}$ has rank less than $n$.

If $g(\vec{x}) = f(\vec{x}) - \frac{\delta}{2}\|L\vec{x} - \vec{c}\|$ is still a proper lower semi-continuous convex function for some $L$ such that $\nullspace(A) \cap \nullspace(L) = \{\vec{0}\}$, then
optimization problem \eqref{eq:underdetermined} is equivalent to
\begin{equation}\label{eq:underdetermined_proximal}
    \vec{z}^\star = \argmin_{\vec{x}\in \reals^n}\left\{\frac{\lambda}{2}\|A\vec{x} - \hat{\vec{b}}\|^2_2 + \frac{\delta}{2}\|L\vec{x} - \vec{c}\|^2_2 + g(\vec{x})\right\},
\end{equation}
i.e., the same optimization problem as when we add an explicit prior, but without the randomization of the prior least squares term. Such a case might arise if $f$ is strongly convex.

As a results, $\vec{z}^\star$ can be written as
\begin{equation*}
    \vec{z}^\star = \prox_g^{\lambda A^TA + \delta L^TL}\left(\vec{x}_{\text{deg}}^\star\right), 
\end{equation*}
where $\vec{x}_{\text{deg}}^\star = (\lambda A^TA + \delta L^TL)^{-1}(A^T\hat{\vec{b}} + L^T\vec{c})$ is a degenerate Gaussian distribution.

Because $\prox_f^{\lambda A^TA + \delta L^TL}$ is still a continuous function, we can conclude from Lemma \ref{lemma:pushforward_support} that $\vec{z}^\star$ is not necessarily supported on the whole of $\dom(f)$ and therefore, the support of $\vec{z}^\star$ might not include the true signal $\vec{x}_0 \in \dom(f)$. The following Lemma characterizes which signals can be reconstructed.
\begin{lemma}\label{lem:source_condition}
Assume that $f$ is such that optimization problem \eqref{eq:underdetermined} is well-posed for any perturbation $\hat{\vec{b}}$, i.e., there exists a unique solution that depends continuously on the perturbation $\hat{\vec{b}}$. Then, for any $\vec{x}_0 \in \mathbb{R}^n$, there exists $\vec{q} \in \mathbb{R}^m$ such that
\begin{equation}\label{eq:exact_perturbation}
    \vec{x}_0 = \argmin_{\vec{x}}\left\{\frac{1}{2}\|A\vec{x} - \vec{q}\|_2^2 + f(\vec{x})\right\},
\end{equation}
if and only if $\partial f(\vec{x}_0) \cap \range(A^T) \neq \emptyset$.

Furthermore, if $\hat{\vec{q}} \in \reals^m$ is a random variable such that $\support(\hat{\vec{q}}) = \reals^m$,
then the random variable
\begin{equation}\label{eq:exact_perturbation_random}
    \vec{x}^{\star} = \argmin_{\vec{x}}\left\{\frac{1}{2}\|A\vec{x} - \hat{\vec{q}}\|_2^2 + f(\vec{x})\right\},
\end{equation}
satisfies
\begin{equation*}\label{eq:exact_perturbation_random_support}
    \support(\vec{x}^{\star}) = \closure\left(\left\{\vec{x} \in \dom(f) \,|\, \partial f(\vec{x}) \cap \range(A^T) \neq \emptyset \right\}\right).
\end{equation*}

\end{lemma}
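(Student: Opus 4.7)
My plan is to treat the two parts of Lemma \ref{lem:source_condition} in order, since the second reduces almost immediately to the first once we understand the solution map.

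For the first (``if and only if'') part, I would begin from the first-order optimality condition of the strongly convex problem in \eqref{eq:exact_perturbation}, namely
\begin{equation*}
    \vec{0} \in A^T(A\vec{x}_0 - \vec{q}) + \partial f(\vec{x}_0),
\end{equation*}
which rearranges to $A^T(\vec{q} - A\vec{x}_0) \in \partial f(\vec{x}_0)$. If such a $\vec{q}$ exists then $A^T(\vec{q}-A\vec{x}_0)$ is an element of $\partial f(\vec{x}_0)\cap\range(A^T)$. Conversely, given any $\vec{s}\in\partial f(\vec{x}_0)\cap\range(A^T)$, write $\vec{s}=A^T\vec{y}$ and set $\vec{q}:=A\vec{x}_0+\vec{y}$; the optimality condition is then satisfied, and the convexity of the objective (together with the well-posedness assumption) makes $\vec{x}_0$ the unique minimizer.

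For the second part, let $T : \reals^m \to \reals^n$ be the solution map $\vec{q}\mapsto\argmin_{\vec{x}}\{\tfrac12\|A\vec{x}-\vec{q}\|_2^2+f(\vec{x})\}$; by the well-posedness hypothesis $T$ is single-valued and continuous. Set
\begin{equation*}
    S := \{\vec{x}\in\dom(f)\,|\,\partial f(\vec{x})\cap\range(A^T)\neq\emptyset\}.
\end{equation*}
The first part tells me exactly that $\range(T)=S$, because a point lies in $\range(T)$ iff it is the unique minimizer for some $\vec{q}$, which is the source condition. Therefore $\vec{x}^\star=T(\hat{\vec{q}})$ takes values in $S$, so $\support(\vec{x}^\star)\subseteq\cl(S)$.

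The reverse inclusion is the topological step and is the only place where a little care is needed. Let $V\subseteq\reals^n$ be open with $V\cap S\neq\emptyset$, pick $\vec{x}\in V\cap S=V\cap\range(T)$ and $\vec{q}\in T^{-1}(\vec{x})$. By continuity of $T$, the set $T^{-1}(V)$ is an open neighborhood of $\vec{q}$ in $\reals^m$; since $\support(\hat{\vec{q}})=\reals^m$ it has positive probability, so $\prob(\vec{x}^\star\in V)>0$. This shows $\cl(S)\subseteq\support(\vec{x}^\star)$, completing the proof. The main (minor) obstacle is just being careful that $T$ is well-defined and continuous so that the pushforward argument goes through; this is exactly what the well-posedness assumption guarantees, and no further compactness or measurability argument is required because $\support(\hat{\vec{q}})=\reals^m$.
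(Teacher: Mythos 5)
Your proposal is correct and follows essentially the same route as the paper: the first part via the optimality condition $A^T(\vec{q}-A\vec{x}_0)\in\partial f(\vec{x}_0)$, and the second part via continuity of the solution map together with a support-of-pushforward argument (which the paper delegates to its Lemma~\ref{lemma:pushforward_support} rather than spelling out as you do). No gaps.
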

\begin{proof}
The first part follows from the observation that the optimality condition of \eqref{eq:exact_perturbation} can be written as
\begin{equation*}
    A^T(\vec{q}-A\vec{x}_0) \in \partial f(\vec{x}_0).
\end{equation*}

For the second part, by assumption the solution map $\vec{q} \mapsto \argmin_{\vec{x}}\left\{\frac{1}{2}\|A\vec{x} - \vec{q}\|_2^2 + f(\vec{x})\right\}$ is continuous, hence the conclusion follows from the first part combined with Lemma \ref{lemma:pushforward_support}.
\end{proof}

Whilst for general regularization functions $f$, the set $\support(\vec{x}^{\star})$ can be very complicated, the following Corollary describes a few cases where $\support(\vec{x}^{\star})$ is well-behaved.

\begin{corollary}\label{cor:support}
Under Assumption \ref{as:polyhedral_epigraph} consider the polyhedral partition $\{\set{F}_i\}_{i\in \set{J}}$ of Definition \ref{def:polyhedral_partition}.
Assume \eqref{eq:exact_perturbation} is well-posed, then if $\partial f(\vec{x}) \cap \range(A^T) \neq \emptyset$ holds for any $\vec{x} \in \set{F}_i$, then it holds for all $\vec{x} \in \set{F}_i$. Thus, there exists $\tilde{\set{J}} \subset \set{J}$ such that
\begin{equation*}\label{eq:exact_perturbation_random_support_polyhedral}
    \support(\vec{x}^{\star}) = \bigcup_{j \in \tilde{\set{J}}}\closure\left(\set{F}_j\right).
\end{equation*}

Furthermore, $f$ is symmetric and positive homogeneous, then $\support(\vec{x}^{\star})$ is a linear subspace and if $f = \chi_\set{C} + g$, where $\set{C}$ is a closed convex set and $g$ is symmetric and positive homogeneous, then $\support(\vec{x}^{\star})$ is a convex set.
\end{corollary}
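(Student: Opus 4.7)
The plan is to handle the three claims of the corollary in sequence. For the first claim (the union-of-closures representation of the support), I would combine Lemma~\ref{lemma:constant_subdifferential}, which says $\partial f$ is constant on each cell $\set{F}_i$ of the polyhedral partition with common value $\partial f_{\set{F}_i}$, with Lemma~\ref{lem:source_condition}. Since the condition $\partial f(\vec{x}) \cap \range(A^T) \neq \emptyset$ then depends only on the cell index, I define $\tilde{\set{J}} := \{i \in \set{J} : \partial f_{\set{F}_i} \cap \range(A^T) \neq \emptyset\}$; Lemma~\ref{lem:source_condition} then gives $\support(\vec{x}^\star) = \closure(\bigcup_{j \in \tilde{\set{J}}} \set{F}_j)$. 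Assumption~\ref{as:polyhedral_epigraph} makes the epigraph polyhedral with only finitely many faces, so the partition is finite and closure distributes over the union, yielding the stated form.

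For the second claim, my approach is to show $\support(\vec{x}^\star)$ is a closed, symmetric, convex cone, which is automatically a linear subspace. Positive homogeneity gives $\partial f(t\vec{x}) = \partial f(\vec{x})$ for $t>0$, so the qualifying set $\set{S} := \{\vec{x} : \partial f(\vec{x}) \cap \range(A^T) \neq \emptyset\}$ is a cone; symmetry gives $\partial f(-\vec{x}) = -\partial f(\vec{x})$, and since $\range(A^T)$ is a linear subspace (hence $-\range(A^T) = \range(A^T)$), $\set{S}$ is closed under negation; both properties pass to $\closure(\set{S})$. For convexity, I would invoke well-posedness to upgrade the solution map $\Phi : \vec{q} \mapsto \vec{x}^\star(\vec{q})$ to a single-valued continuous map that is moreover odd and positive-$1$-homogeneous (in $\vec q$), and then use the piecewise-linear polyhedral structure from Assumption~\ref{as:polyhedral_epigraph} to conclude that its image, which equals $\set{S}$, is convex. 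The third claim is then handled by specializing this argument to $f = \chi_{\set{C}} + g$: using $\partial f(\vec{x}) = N_{\set{C}}(\vec{x}) + \partial g(\vec{x})$, the symmetric-cone argument applies to the $\partial g$-component, and intersecting with $\set{C}$ yields a closed convex subset of $\set{C}$—no longer a subspace since $\set{C}$ need not be balanced about the origin.

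The main obstacle is the convexity assertion in Parts~2 and~3. The cone and symmetry properties follow mechanically from subdifferential calculus for a symmetric, positively homogeneous $f$, but convexity of $\closure(\bigcup_{j \in \tilde{\set{J}}} \set{F}_j)$ is not a formal consequence of those properties alone; it depends essentially on the well-posedness hypothesis (so that $\Phi$ is single-valued and continuous, with the appropriate homogeneity inherited from $f$) together with the polyhedral piecewise-linear structure, which jointly should rule out configurations of cells in $\tilde{\set{J}}$ whose closures fail to glue into a convex set. That step is where I expect the essential work to lie.
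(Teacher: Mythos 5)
The paper states this corollary without proof, so there is no argument of the authors' to compare against; I can only assess your proposal on its own terms. Your Part 1 is correct and is surely the intended route: Lemma \ref{lemma:constant_subdifferential} makes $\partial f$ constant on each cell of the partition, Lemma \ref{lem:source_condition} identifies $\support(\vec{x}^{\star})$ with $\closure(\set{S})$ where $\set{S}=\{\vec{x}:\partial f(\vec{x})\cap\range(A^T)\neq\emptyset\}$, and finiteness of the polyhedral partition lets the closure distribute over the union. The cone and symmetry properties of $\set{S}$ that you derive for Part 2 are also correct.

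The convexity step, however, is a genuine gap, in two respects. First, your proposed mechanism is partly false: the solution map $\Phi(\vec{q})=\argmin_{\vec{x}}\{\frac12\|A\vec{x}-\vec{q}\|_2^2+f(\vec{x})\}$ is odd but \emph{not} positively $1$-homogeneous in $\vec{q}$ (for $A=1$, $f=|\cdot|$ it is soft thresholding, and $\Phi(2q)\neq 2\Phi(q)$), and in any case the image of a continuous odd map with polyhedral pieces need not be convex, so no argument has actually been given. Second, convexity cannot follow from the properties you do establish: take $n=2$, $f=\|\cdot\|_1$, $A=(1,1)$; then $\set{S}=\{\vec{x}: x_1x_2\ge 0\}$, a closed, symmetric, finite union of polyhedral cones that is not convex. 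That example fails well-posedness (the $\ell_1$-minimizer over $x_1+x_2=s$ is non-unique), which shows the well-posedness hypothesis must enter the convexity proof in an essential way — and your sketch never says how it does. Finally, in Part 3 the set identity you lean on is wrong: since $\partial f(\vec{x})=N_{\set{C}}(\vec{x})+\partial g(\vec{x})$ and $0\in N_{\set{C}}(\vec{x})$, one only gets $\set{S}\supseteq \set{C}\cap\{\vec{x}:\partial g(\vec{x})\cap\range(A^T)\neq\emptyset\}$, and the inclusion can be strict on $\bd(\set{C})$, where the normal cone may supply the required element of $\range(A^T)$ even when $\partial g(\vec{x})$ does not; so ``intersecting with $\set{C}$'' does not describe $\set{S}$, and the convexity claim there remains unproved as well.
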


As an example, by the second part of Corollary \ref{cor:support}, the support $\support(\vec{x}^{\star})$ is a linear subspace. The convexity of $\support(\vec{x}^{\star})$  guarantees that certain point estimates like the empirical mean stays within the support. However, note that the support does not depend on the data used for reconstruction, because the perturbed data has full support.

Whilst these methods with low-dimensional randomization can still give good results, the low-dimensional support of the distributions cannot guarantee positive mass around the truth, which can be guaranteed by adding the randomized linear least squares regularization as described in the beginning of Subsection \ref{subsec:overdetermined_random}.

\section{Bayesian hierarchical model}\label{sec:bayes_hierarch}

Consider the setting of Subsection \ref{subsec:overdetermined_random} with $\vec{c} = \vec{0}$. To make the model more robust, we will add prior distributions on the hyperparameters $\lambda$ and $\delta$. More precisely, let the conditional distribution $\pi(\vec{x}\,|\,\lambda,\delta,\vec{b})$ be defined by
\begin{equation}\label{eq:regularized_linear_inverse_problem_scaled}
    \argmin_{\vec{x}\in \reals^n}\left\{\frac{\lambda}{2}\|A\vec{x} - \hat{\vec{b}}\|^2_2 + \frac{\delta}{2}\|L\vec{x} - \hat{\vec{c}}\|^2_2 + \sqrt{\delta}f(\vec{x})\right\},
\end{equation}
with $\hat{\vec{b}} \sim \mathcal{N}(\vec{b}, \lambda^{-1} I)$ and $\hat{\vec{c}} \sim \mathcal{N}(\vec{0}, \delta^{-1} I)$.
Furthermore, let $\alpha_\lambda, \alpha_\delta, \beta_\lambda, \beta_\delta > 0$ and define the hyperpriors
\begin{equation*}
    \lambda \sim \Gamma(\alpha_\lambda, \beta_\lambda)\quad \text{and}\quad \delta \sim \Gamma(\alpha_\delta, \beta_\delta), \nonumber\\
\end{equation*}
or equivalently
\begin{equation}\label{eq:hyperpriors}
    \pi(\lambda) \,\propto\, \lambda^{\alpha_\lambda - 1}\exp(-\beta_\lambda \lambda) \quad \text{and}\quad \pi(\delta) \,\propto\, \delta^{\alpha_\delta - 1}\exp(-\beta_\delta \delta).
\end{equation}

To be able to compute the conditional distribution of $\delta$ given $\vec{x}$, we need to know how the conditional prior \eqref{eq:conditional_prior} depends on $\delta$, which is hidden in the proportionality. This proportionality can be computed under some restrictions on the regularization function $f$, as stated in the following lemma.
\begin{lemma}\label{lemma:conditional_prior_scaled}
Assume that the epigraph of $f$ is a polyhedral cone up to an additive constant. Equivalently, $f$ satisfies Assumption \ref{as:polyhedral_epigraph} and is positive homogeneous, i.e., $f(\gamma \vec{x}) = \gamma f(\vec{x})$ for $\gamma \geq 0$. Then the conditional prior $\pi(\vec{x}\,|\, \delta, \set{F}_j)$ satisfies
\begin{equation}\label{eq:conditional_prior_scaled}
    \pi(\vec{x}\,|\, \delta, \set{F}_j)\, \propto\, \delta^{-\dim(\set{F}_j)/2}\exp\left(-\frac{\delta}{2}\|L\vec{x}\|_2^2 - \sqrt{\delta}f(\vec{x})\right).
\end{equation}

\end{lemma}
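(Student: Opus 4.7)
The plan is to reduce the statement to Theorem~\ref{thm:gauss_decomposition} and then extract the $\delta$-dependence of the normalising constant via a scaling change of variables that exploits positive homogeneity of $f$. First I would apply Theorem~\ref{thm:gauss_decomposition} to the regularised Gaussian associated with \eqref{eq:regularized_linear_inverse_problem_scaled} (with $f$ replaced by $\sqrt{\delta}f$), followed by the same quotient-by-likelihood manipulation used to derive \eqref{eq:conditional_prior}. This yields, as a function of $\vec{x}$ on $\set{F}_j$, a conditional prior proportional to $\exp(-\tfrac{\delta}{2}\|L\vec{x}\|_2^2-\sqrt{\delta}f(\vec{x}))$, so the exponential factor in the claim is already handled and the whole task collapses to understanding how the normaliser $Z(\delta):=\int_{\set{F}_j}\exp(-\tfrac{\delta}{2}\|L\vec{x}\|_2^2-\sqrt{\delta}f(\vec{x}))\,\mathrm d\vec{x}$ depends on $\delta$.

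Before rescaling, I would record the geometric ingredient that makes the extra hypothesis essential: when $\epi(f)$ is a polyhedral cone, every face of the epigraph contains the apex, so each $\set{F}_j$ from Definition~\ref{def:polyhedral_partition} is the relative interior of a polyhedral cone through the origin. In particular $\sqrt{\delta}\,\set{F}_j=\set{F}_j$ for every $\delta>0$; this scale-invariance is precisely what allows the subsequent change of variables to keep the domain of integration unchanged. Under the general polyhedral-epigraph assumption of Subsection~\ref{subsec:sparsity} the $\set{F}_j$ need not be cones, and the argument would fail here.

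The final step is the substitution $\vec{y}=\sqrt{\delta}\,\vec{x}$ inside $Z(\delta)$. Positive homogeneity turns $\sqrt{\delta}f(\vec{x})$ into $f(\vec{y})$ and $\tfrac{\delta}{2}\|L\vec{x}\|_2^2$ into $\tfrac12\|L\vec{y}\|_2^2$, while the $\dim(\set{F}_j)$-dimensional Jacobian contributes a pure power of $\delta$; combined with cone-invariance of $\set{F}_j$ this exhibits $Z(\delta)$ as a fixed power of $\delta$ multiplied by the $\delta$-independent quantity $Z(1)$, and dividing the unnormalised density by this $Z(\delta)$ produces the claimed $\delta$-prefactor. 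The step I expect to need the most care is bookkeeping the reference measure on $\set{F}_j$: since $\set{F}_j$ is generically of lower dimension than $\reals^n$, the Jacobian must be interpreted with respect to the intrinsic Lebesgue measure on $\set{F}_j$ so that it composes correctly with the convention implicit in the proof of Theorem~\ref{thm:gauss_decomposition}; everything else is a routine Gaussian-integral manipulation.
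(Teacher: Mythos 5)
Your proposal is essentially the paper's own proof: the exponential factor comes from Theorem \ref{thm:gauss_decomposition} together with the quotient-by-likelihood step behind \eqref{eq:conditional_prior}, and the $\delta$-prefactor is extracted by the substitution $\vec{y}=\sqrt{\delta}\,\vec{x}$ in the normaliser, using positive homogeneity of $f$ and the scale-invariance $\sqrt{\delta}\,\set{F}_j=\set{F}_j$ exactly as you describe (the paper leaves the cone-invariance of the faces and the intrinsic lower-dimensional reference measure implicit, which you rightly flag as where the extra hypothesis enters). One caveat on bookkeeping: the substitution gives $Z(\delta)=\delta^{-\dim(\set{F}_j)/2}Z(1)$ and hence a prefactor $\delta^{+\dim(\set{F}_j)/2}$, which is what the paper's own proof concludes and what is used downstream in \eqref{eq:full_hyperparameter_dependency}, so the exponent $-\dim(\set{F}_j)/2$ in the displayed statement \eqref{eq:conditional_prior_scaled} is a sign typo and your argument should land on the positive exponent rather than being forced to match the display.
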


\begin{proof}
We can obtain the dependence on $\delta$ by computing the normalization constant $K$ using that any set $\set{F}_j$ is invariant under multiplication by a positive constant,
\begin{align*}
    1 = \int_{\set{F}_j}\pi(\vec{x}\,|\, \delta, \set{F}_i) \text{d}\vec{x}
    &= K\int_{\set{F}_j}\exp\left(-\frac{\delta}{2}\|L\vec{x}\|_2^2 - \sqrt{\delta}f(\vec{x})\right) \text{d}\vec{x}\\
    &= K\int_{\set{F}_j}\exp\left(-\frac{1}{2}\|L\sqrt{\delta}\vec{x}\|_2^2 - f(\sqrt{\delta}\vec{x})\right) \text{d}\vec{x}\\
    &= K\delta^{-\text{dim}(\set{F}_j)/2}\int_{\set{F}_j}\exp\left(-\frac{1}{2}\|L\vec{x}\|_2^2 - f(\vec{x})\right) \text{d}\vec{x}.
\end{align*}
Hence we obtain that
\begin{equation*}
    \pi(\vec{x}\,|\, \delta, \set{F}_j)\, \propto\, \delta^{\dim(\set{F}_j)/2}\exp\left(-\frac{\delta}{2}\|L\vec{x}\|_2^2 - \sqrt{\delta}f(\vec{x})\right).
\end{equation*}
\end{proof}

Note that in both optimization problem \eqref{eq:regularized_linear_inverse_problem_scaled} and the conditional prior \eqref{eq:conditional_prior_scaled}, the regularization function is scaled by the hyperparameter $\delta$. While the additional condition that the epigraph of $f$ is a polyhedral cone can be quite restrictive, many important regularization functions do satisfy this condition, e.g., nonnegativity constraints and $l_1$-norm based regularization, like $\|Dx\|_1$ for any matrix $D$.

Using Lemma \ref{lemma:conditional_prior_scaled}, we can obtain the distribution of the (hyper)parameters given $\set{F}_j$,
\begin{align}
    \pi(\vec{x},\lambda,\delta\,|\, \vec{b}, \set{F}_j) \,\propto\, &  \mathbf{1}\left\{\vec{x} \in \set{F}_j\right\}\pi(\vec{b}\,|\, \vec{x},\lambda,\delta) \pi(\vec{x}\,|\,\delta,\set{F}_j) \pi(\lambda) \pi(\delta) \nonumber\\
    \,\propto\, & \mathbf{1}\left\{\vec{x} \in \set{F}_j\right\}\lambda^{m/2+\alpha_{\lambda} - 1}\delta^{\text{dim}(\set{F}_j)/2+\alpha_{\delta} - 1}\nonumber\\
       &\times\exp\left(-\frac{\lambda}{2}\|A\vec{x} - \vec{b}\|_2^2 - \frac{\delta}{2}\|L\vec{x}\|_2^2 - \sqrt{\delta}f(\vec{x}) - \beta_{\lambda}\lambda - \beta_{\delta}\delta\right), \label{eq:full_hyperparameter_dependency}
\end{align}
where $\mathbf{1}\left\{\vec{x} \in \set{F}_j\right\}$ is $1$ if $\vec{x} \in \set{F}_j$ and $0$ otherwise.

Given $\vec{x}$, denote the set $\set{F}$ which contains $\vec{x}$ by $\set{F}(\vec{x})$, then we obtain the following conditional distributions of the hyperparameters,
\begin{align}
    \pi(\lambda\, |\, \vec{x}, \vec{b}) &\,\propto\, \lambda^{m/2 + \alpha_{\lambda}-1}\exp\left( - \frac{\lambda}{2}\|A\vec{x} - \vec{b}\|_2^2-\beta_{\lambda}\lambda\right), \text{ for } \lambda > 0\quad \text{and} \label{pi_lambda}\\
    \pi(\delta\, |\, \vec{x}, \vec{b}) &\,\propto\, \delta^{\text{dim}(\set{F}(\vec{x}))/2 + \alpha_{\delta}-1}\exp\left(- \frac{\delta}{2}\|L\vec{x}\|_2^2 - \sqrt{\delta}f(\vec{x})-\beta_{\delta}\delta \right), \text{ for } \delta > 0,\label{pi_delta}
\end{align}
of which the first conditional distribution satisfies $\lambda\, |\, \vec{x}, \vec{b} \sim \Gamma(m/2 + \alpha_{\lambda}, \frac12\|A\vec{x} - \vec{b}\|_2^2 + \beta_{\lambda})$.

These conditional distribution can be used to derive the Polyhedral Cone Epigraph Hierarchical Gibbs Sampler described in Algorithm \ref{sampler:PCEHGS}.

\begin{algo}[H]
\centering
\begin{minipage}{.6\textwidth}
\begin{algorithmic}[1]
\STATE{ \textbf{Input:} $\vec{x}^0,\alpha_{\lambda}, \beta_{\lambda}, \alpha_{\delta}, \beta_{\delta}, k_{\max}$}

\FOR{$k = 1$ \TO $k_{\max}$ }

\STATE {Compute $(\lambda^k, \delta^k) \sim \pi(\lambda, \delta\,|\,\vec{x},\vec{b})$ as follows:} 
\STATE {\quad$\lambda^k \sim \Gamma\left(m/2+\alpha_{\lambda}, \frac12\|A\vec{x}^{k-1}-\vec{b}\|_2^2 + \beta_{\lambda}\right),$} 
\STATE {\quad
    $\delta^k \sim \pi(\delta\,|\,\vec{x}^{k-1},\vec{b})$ defined in \eqref{pi_delta}.} 
\STATE {Compute $\vec{x}^k \sim \pi_{\vec{x}|\vec{b},\lambda^k,\delta^k}$ using \eqref{eq:regularized_linear_inverse_problem_scaled}}.

\ENDFOR

\RETURN{$\{(\vec{x}^k,\lambda^k,\delta^k)\}_{k=1,\dots, k_{\max}}$}

\end{algorithmic}
\end{minipage}
\caption{Polyhedral Cone Epigraph Hierarchical Gibbs Sampler for $(\vec{x}, \lambda, \delta)$.}
\label{sampler:PCEHGS}
\end{algo}

Sampling from $\lambda\, |\, \vec{x}, \vec{b}$ is relatively easy compared to $\delta\, |\, \vec{x}, \vec{b}$, due to the former distribution being able to exploit the conjugacy between Gamma and Gaussian distributions. There is one case where the conditional distribution $\delta\, |\, \vec{x}, \vec{b}$ can be simplified to a Gamma distribution. If the regularization function is the characteristic function of a polyhedral cone, then
\begin{equation*}
    \delta\, |\, \vec{x}, \vec{b} \sim \Gamma(\text{dim}(\set{F}(\vec{x}))/2 + \alpha_{\delta}, \frac12\|L\vec{x}\|_2^2 + \beta_{\delta}).
\end{equation*}
For this constrained setting, Algorithm \ref{sampler:PCEHGS} simplifies to the Polyhedral Cone Hierarchical Gibbs Sampler from \cite[Sampler 3.1]{everink2022bayesian}.

An alternative model with simpler conditional distribution is as follows. If $\nullspace(A) = \{\vec{0}\}$, then there is no need for the randomized least squares term $\frac{\delta}{2}\|L\vec{x} - \hat{\vec{c}}\|^2_2$ in optimization problem \eqref{eq:regularized_linear_inverse_problem_scaled} to make use of the proximal framework. Therefore, consider the alternative optimization problem
\begin{equation}\label{eq:regularized_linear_inverse_problem_scaled_alt}
    \argmin_{\vec{x}\in \reals^n}\left\{\frac{\lambda}{2}\|A\vec{x} - \hat{\vec{b}}\|^2_2 + \gamma f(\vec{x})\right\},
\end{equation}
with $\hat{\vec{b}} \sim \mathcal{N}(\vec{b}, \lambda^{-1} I)$. Then we can put a hyperprior on both the noise level $\lambda$ and the regularization strength $\gamma$. More precisely, let
\begin{equation}\label{eq:hyperpriors_alt}
    \lambda \sim \Gamma(\alpha_\lambda, \beta_\lambda)\quad \text{and}\quad \gamma \sim \Gamma(\alpha_\gamma, \beta_\gamma),\\
\end{equation}
with $\alpha_\lambda, \alpha\gamma, \beta_\lambda, \beta_\gamma > 0$.

Using a similar argument as in Lemma \ref{lemma:conditional_prior_scaled}, we obtain the conditional distributions
\begin{align}
    \lambda\, |\, \vec{x}, \vec{b} \sim \Gamma(m/2 + \alpha_{\lambda}, \frac12\|A\vec{x} - \vec{b}\|_2^2 + \beta_{\lambda} ) \label{eq:conditional_lambda}\\
    \gamma\, |\, \vec{x}, \vec{b} \sim \Gamma(\text{dim}(\set{F}(\vec{x})) + \alpha_{\gamma}, f(\vec{x}) + \beta_{\gamma}). \label{eq:conditional_gamma}
\end{align}

Note that these distributions cannot be directly derived from \eqref{eq:full_hyperparameter_dependency} in the case $L = 0$. A Gibbs sampler similar to Algorithm \ref{sampler:PCEHGS} is obtained by replacing the conditional distribution for $\delta$ in line 5 of Algorithm \ref{sampler:PCEHGS} with the conditional distribution \eqref{eq:conditional_gamma}.

Algorithm \ref{sampler:PCEHGS} requires computing $\text{dim}(\set{F}(\vec{x}))$ for any $\vec{x} \in \dom(f)$. This quantity can be computed efficiently for many regularization functions.
Each face $\set{F}$ is contained in the solution space of a linear system $S\vec{x} = \vec{s}$ that describes the sparse subset, furthermore, $\dim(\set{F}) = \dim(\nullspace(S)) = n - \Rank(S)$. For $f(\vec{x}) = \chi_{\reals^n_+}(\vec{x})$ or $f(\vec{x}) = \gamma \|\vec{x}\|_1$, and $\vec{x} \in \reals^n_+$, the matrix $S$ corresponding to $\set{F}(\vec{x})$ consists of the rows of the identity matrix that correspond to the zeroes in $\vec{x}$. Similarly, for $f(\vec{x}) = \gamma \|D\vec{x}\|_1$, the matrix $S$ that corresponds to $\set{F}(\vec{x})$ consists of the rows $\vec{d}$ of $D$ for which $\vec{d}^T\vec{x} = 0$. Although this approach works for any sparsity pattern, the following proposition gives some simple expressions for $\dim(\set{F}(\vec{x}))$ for some of the previous examples.

\begin{proposition}\label{prop:face_dimension}
Let $\gamma > 0$, then the follow expressions for the dimensions of the faces hold,
\begin{equation*}
    \dim(\set{F}(\vec{x})) = 
    \begin{cases}
        \|\vec{x}\|_0,\  & \text{if } f(\vec{x}) = \chi_{\reals_+^n}(\vec{x}) \text{ or } f(\vec{x}) = \gamma \|\vec{x}\|_1,\\
        \|D\vec{x}\|_0+n-\Rank(D),\  & \text{if } f(\vec{x}) = \gamma \|D \vec{x}\|_1,\\
        N_{\text{flat}}(\vec{x}),\  & \text{if } f(\vec{x}) = \gamma \|L\vec{x}\|_1,\\
        N_{\text{nz-flat}}(\vec{x}),\  & \text{if } f(\vec{x}) = \gamma \|L\vec{x}\|_1 + \chi_{\reals_+^n}(\vec{x}),\\
    \end{cases}
    \end{equation*}
    where $D \in \reals^{k\times n}$ with $k \leq n$ and full rank, $L$ is the finite difference matrix for anisotropic total variation in any finite dimension, $N_{\text{flat}}(\vec{x})$ is the number of connected flat regions in $\vec{x}$ and $N_{\text{nz-flat}}(\vec{x})$ is the number of non-zero connected flat regions in $\vec{x}$. 
\end{proposition}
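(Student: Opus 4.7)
The strategy is to use the observation from the paragraph just before the proposition: for each choice of $f$, identify a matrix $S$ whose rows describe the linear equalities cutting out the affine hull of $\set{F}(\vec{x})$, and then read off $\dim(\set{F}(\vec{x})) = n - \Rank(S)$.

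The first two cases collapse onto a single matrix. For $f(\vec{x}) = \chi_{\reals_+^n}(\vec{x})$, the face is determined by the active index set $\{i \,|\, x_i = 0\}$, so $S$ is the submatrix of the identity collecting those rows. For $f(\vec{x}) = \gamma\|\vec{x}\|_1$ the subdifferential is constant on each open orthant, so the face is again determined by the zero pattern of $\vec{x}$ and the same $S$ applies; in both cases the rows are linearly independent and $\Rank(S) = n - \|\vec{x}\|_0$. For $f(\vec{x}) = \gamma\|D\vec{x}\|_1$ with $D \in \reals^{k\times n}$ of full row rank, I would take $S$ to be the submatrix of $D$ formed by the rows where $(D\vec{x})_i = 0$; since any subset of rows of a full-row-rank matrix is independent, $\Rank(S) = k - \|D\vec{x}\|_0$, giving $\dim(\set{F}(\vec{x})) = n - k + \|D\vec{x}\|_0$, which matches the claim because $\Rank(D) = k$.

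The hard part is the anisotropic total variation case, where $L$ fails to have full row rank and a purely algebraic count is awkward. My plan is to use the graph-theoretic interpretation: each row of $L$ corresponds to an edge of the pixel adjacency graph, and the rows of $S$ form the signed incidence matrix of the subgraph $G$ whose edges join those adjacent pixels that have equal values in $\vec{x}$. The connected components of $G$ are exactly the maximal flat regions of $\vec{x}$, so by the classical theorem that the rank of the (signed) incidence matrix of a graph equals the number of vertices minus the number of connected components, $\Rank(S) = n - N_{\text{flat}}(\vec{x})$, yielding the third line. This argument is dimension-agnostic, so it covers anisotropic TV on grids of any dimension as stated.

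Finally, for $f(\vec{x}) = \gamma\|L\vec{x}\|_1 + \chi_{\reals_+^n}(\vec{x})$, I augment $S$ with the rows $\vec{e}_i^T$ for each $i$ with $x_i = 0$ and do a region-by-region rank count. A flat region $R$ sitting at value zero contributes $|R|$ independent rows because the pointwise zero constraints subsume the $|R|-1$ within-region edge constraints, whereas a flat region at a positive value contributes $|R|-1$ rows from any spanning tree of $R$. Summing over all regions gives $\Rank(S) = n - N_{\text{nz-flat}}(\vec{x})$ and the final line. The main delicate point in this last case is verifying that the zero-pixel rows added for different regions remain independent from the edge rows across regions, which follows because edges of $G$ never cross region boundaries.
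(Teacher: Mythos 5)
Your proposal is correct and follows essentially the same route as the paper: for each regularizer, identify the matrix $S$ of active linear constraints cutting out $\set{F}(\vec{x})$ and compute $\dim(\set{F}(\vec{x})) = n - \Rank(S)$, with the same rank counts in every case. The only difference is cosmetic: for the total-variation cases the paper permutes $S$ into a block form indexed by the flat regions and counts the rank of each block ($l_j-1$, or $l_j$ for zero regions), whereas you invoke the incidence-matrix rank theorem for the equality subgraph directly, which makes the ``any finite dimension'' claim slightly more transparent but rests on the same underlying fact.
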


\begin{proof}
For $f(\vec{x}) = \chi_{\reals_+^n}(x)$ or $f(\vec{x}) = \gamma \|x\|_0$, the matrix $S$ corresponding to $\set{F}(\vec{x})$ has rows $\vec{e}_i$ for $i$ such that $\vec{x}_i = 0$. Hence, the rank of $S$ is the number of zero elements in $\vec{x}$, i.e., $\Rank(S) = n-\|\vec{x}\|_0$, therefore, $\dim(\set{F}(\vec{x})) = \|\vec{x}\|_0$.

Similarly for $f(\vec{x}) = \gamma \|D \vec{x}\|_1$, the rows of $S$ corresponding to $\set{F}(\vec{x})$ are the rows $\vec{d}$ of $D$ for which $\vec{d}^T\vec{x} = 0$. Because $D$ is full rank, $S$ is full-rank with $\Rank(S) = \Rank(D)-\|D\vec{x}\|_0$.

For $f(\vec{x}) = \gamma \|L\vec{x}\|_1$, let $\vec{x}\in \reals^n$ and $P$ be a permutation matrix such that $P\vec{x}$ can be split into $N_{\text{flat}}(\vec{x})$ consecutive components, each corresponding to a connected flat area in $\vec{x}$. The matrix $S$ corresponding to $\set{F}(\vec{x})$ has a row for each neighboring values of $\vec{x}$ with the same value, hence $PSP^T$ is a block matrix with $N_{\text{flat}}(\vec{x})$ rectangular blocks on the diagonal and zero otherwise. If $l_j$ is the number of columns in a block $B_j$, i.e., $l_j$ is the number of values in the flat component that $B_j$ represents, then $\Rank(B_j) = l_j-1$. Therefore,
\begin{equation*}
    \Rank(S) = \Rank(PSP^T) = \sum_{j=1}^{N_{\text{flat}}(\vec{x})}\Rank(B_j) = \sum_{j=1}^{N_{\text{flat}}(\vec{x})}(l_j-1) = n - N_{\text{flat}}(\vec{x}),
\end{equation*}
hence we can conclude that $\dim(\set{F}(\vec{x})) = N_{\text{flat}}(\vec{x})$.

The argument for $f(\vec{x}) =  \gamma \|D\vec{x}\|_1 + \chi_{\reals_+^n}(\vec{x})$ is almost identical, except that the rank for any block $B_j$ of zero components satisfies $\Rank(B_j) = l_j$, hence $\Rank(S) = n-N_{\text{nz-flat}}(\vec{x})$.
\end{proof}

Although Proposition \ref{prop:face_dimension} states that $\dim(\set{F}(\vec{x}))$ can be efficiently computed, computing samples from \eqref{eq:regularized_linear_inverse_problem_scaled} given the hyperparameters is a problem considered in Section \ref{sec:numerical_examples}.

\section{Numerical examples}\label{sec:numerical_examples}
We now apply the methods of Sections \ref{sec:inverse_problems} and \ref{sec:bayes_hierarch} to a deblurring and a computed tomography example. Subsection \ref{subsec:numex_reg} focuses on the effects of regularization and Subsection \ref{subsec:numex_gibbs} focuses on the Gibbs sampler from Section \ref{sec:bayes_hierarch}. Finally, in Subsection \ref{subsec:numex_CT}, the ideas of Subsection \ref{subsec:underdetermined_random} are used to show that valid results can be obtained in the underdetermined setting. For all experiments, the Alternating Direction Method of Multipliers (ADMM) is used for approximately solving \eqref{eq:regularized_linear_inverse_problem_scaled} with a fixed number of iterations. A short explanation of the algorithm can be found in Appendix \ref{ap:algorithm}.

\subsection{Effect of regularization}\label{subsec:numex_reg}
In this subsection and the next, we consider a one-dimensional Gaussian deblurring problem defined by
\begin{equation*}
    \vec{b} = A\vec{x} + \vec{e},
\end{equation*}
for a true signal $\vec{x}\in \reals^n$, noise $\vec{e} \sim \mathcal{N}(\vec{0}, \lambda^{-1}I)$ with hyperparameter $\lambda = 1000$ and forward operator $A$ defined by the Toeplitz matrix
\begin{equation*}
    A_{ij} = \frac{h}{\sigma\sqrt{2\pi}}\exp\left(-\frac12 \left(\frac{h(i-j)}{\sigma}\right)^2\right),\quad \text{ for } i,j = 1,\dots, n,
\end{equation*}
where $m=n=128$, $h=1/n$ and $\sigma = 0.02$. The true signal $\vec{x}$and the noisy data $\vec{b}$ are shown in Figure \ref{fig:imp_ex_data}.

\begin{figure}
    \centering
    \includegraphics[width=0.8\textwidth]{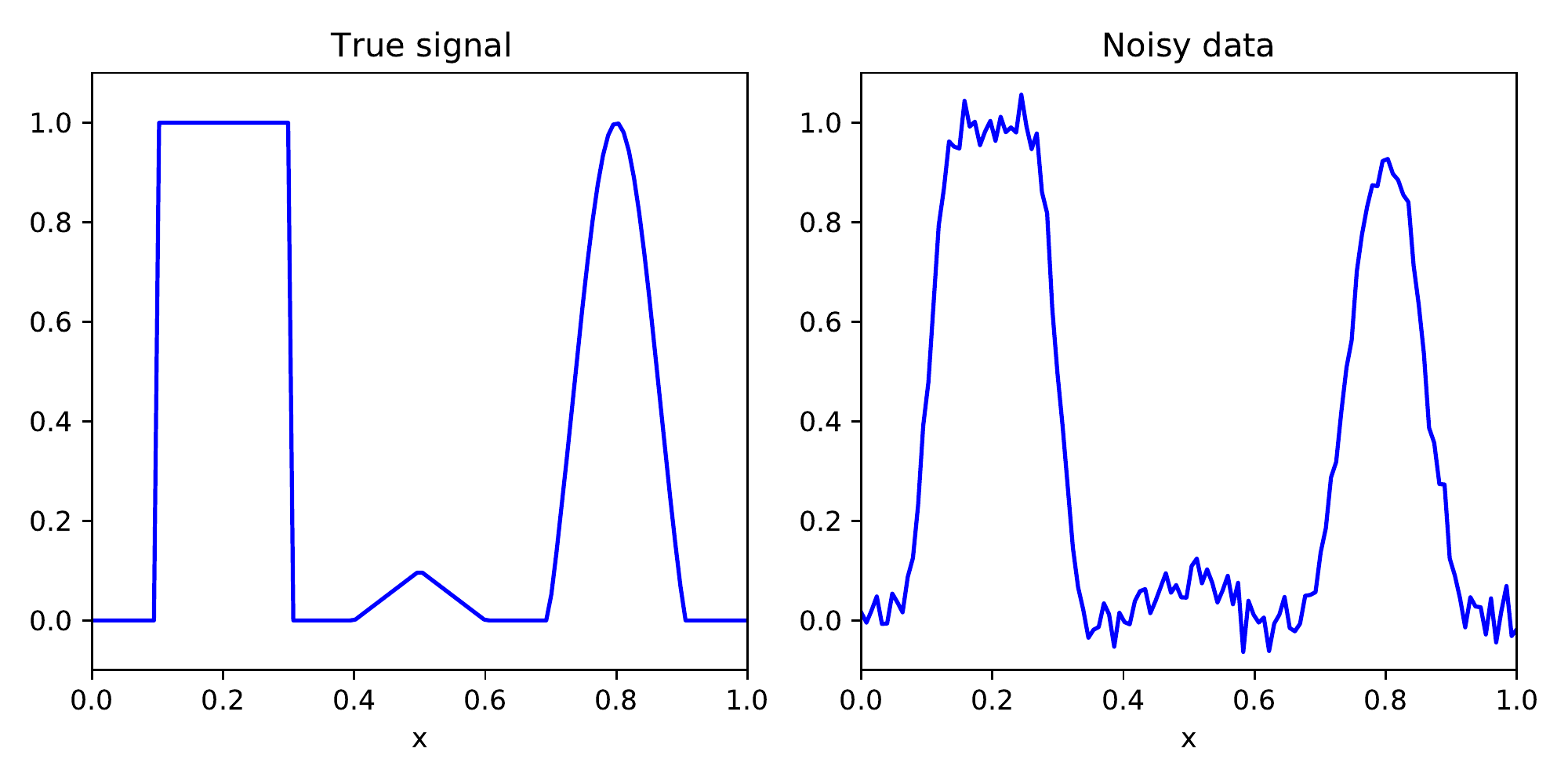}
    \caption{True signal (left) and noisy data with $\lambda = 1000$ (right).}
    \label{fig:imp_ex_data}
\end{figure}

The forward operator $A$ is a Gaussian blur operator with zero boundary condition, and $A$ is full rank. Therefore, we do not require an additional randomized least squares term to apply the theory of Subsection \ref{subsec:overdetermined_random} and Section \ref{sec:bayes_hierarch}, and will consider the randomized optimization problem
\begin{equation}\label{eq:experiment1_optimization}
    \argmin_{\vec{x} \in \reals^{128}} \left\{\frac{\lambda}{2}\|A\vec{x}- \hat{\vec{b}}\|_2^2 + \gamma \|L\vec{x}\|_1\right\}
\end{equation}
with $\hat{\vec{b}} \sim \mathcal{N}(\vec{b}, \lambda^{-1}I)$ and $\gamma > 0$, where $L \in \reals^{(n-1)\times n}$ is a finite difference matrix. For the unperturbed problem, i.e., replacing $\hat{\vec{b}}$ by $\vec{b}$, the deterministic solution corresponds to the MAP estimate of the posterior obtained by using a Laplace difference prior. This posterior is of the form
\begin{equation}\label{eq:laplace_difference}
    \pi(\vec{x}\,|\,\vec{b})\, \propto\, \exp\left(-\frac{\lambda}{2}\|A\vec{x} - \vec{b}\|_2^2 - \gamma \|L\vec{x}\|_1\right).
\end{equation}

From Subsection \ref{subsec:overdetermined_random}, we know that samples from the implicitly defined probability distribution \eqref{eq:experiment1_optimization} have piece-wise constant behavior with positive probability. Unlike samples from the explicitly defined posterior distribution \eqref{eq:laplace_difference}. Figure  \ref{fig:imp_ex_samples} shows few (nearly) independent samples of the implicit distribution \eqref{eq:experiment1_optimization} and explicit distribution \eqref{eq:laplace_difference} for different values of $\gamma$, i.e., the regularization strength and prior strength respectively. The samples for the explicit distribution \eqref{eq:laplace_difference} have been computed using a Random Walk MCMC algorithm, whilst samples from the implicit distribution  \eqref{eq:experiment1_optimization} have been computed using ADMM. Figure \ref{fig:imp_ex_samples} highlights the difference in regularity between the samples from a continuous distribution and those obtained through a varying dimension model.

\begin{figure}
    \centering
    \includegraphics[width=\textwidth]{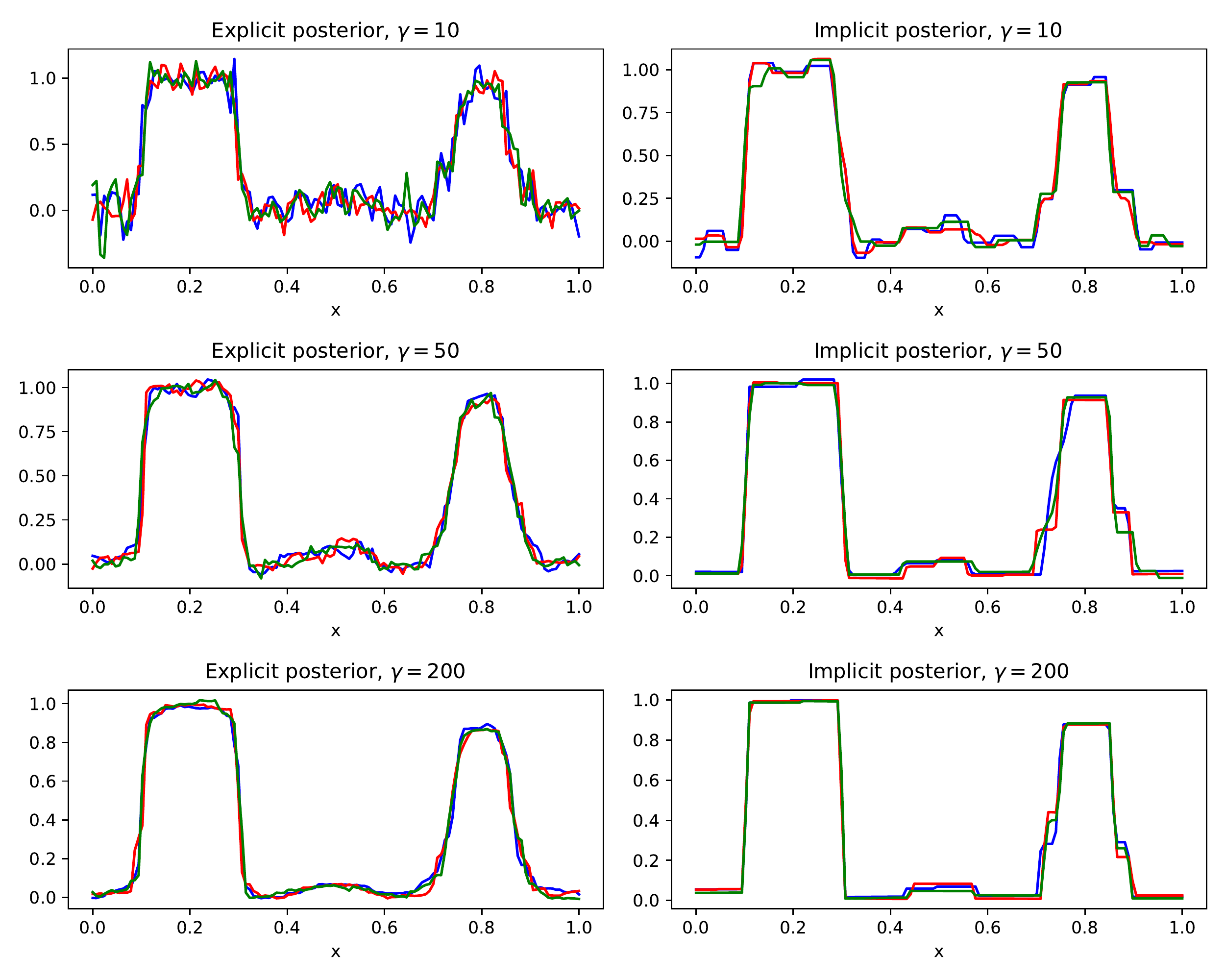}
    \caption{Samples from a posterior with a Laplace difference prior (left) and from a total variation regularized randomized linear least squares problem (right).}
    \label{fig:imp_ex_samples}
\end{figure}

Whilst the MAP estimate of \eqref{eq:laplace_difference} corresponds to the unperturbed solution of \eqref{eq:experiment1_optimization}, other point estimates show different behavior. Figure \ref{fig:imp_ex_median} shows the componentwise median for both models for different values of $\gamma$. The explicit model \eqref{eq:laplace_difference} shows a higher degree of smoothness than the implicit model \eqref{eq:experiment1_optimization}.

\begin{figure}
    \centering
    \includegraphics[width=\textwidth]{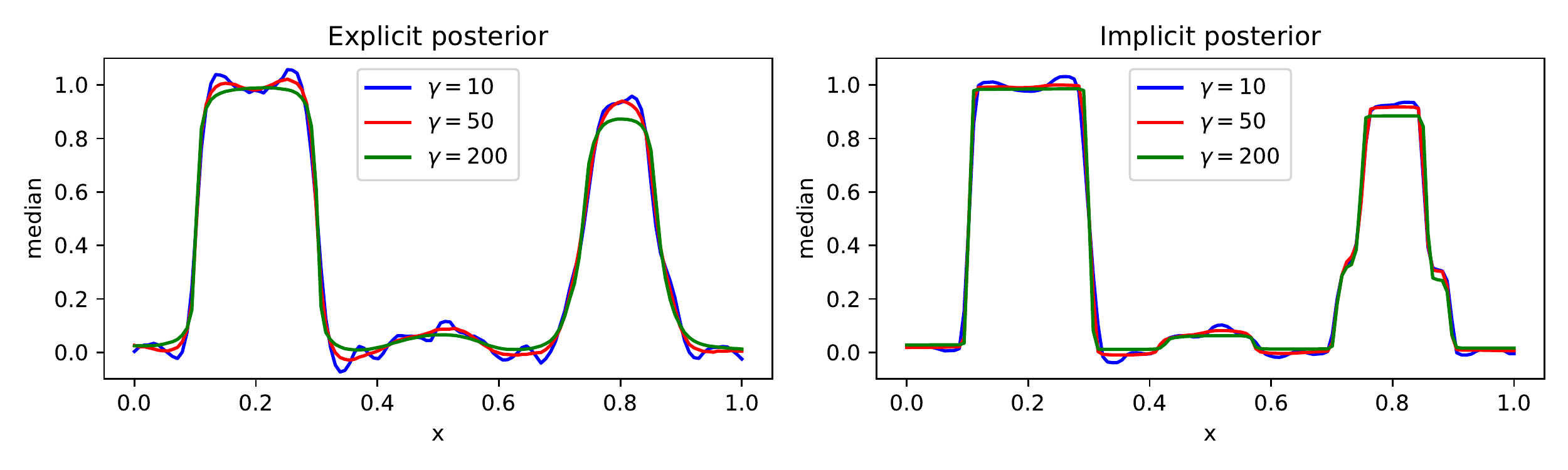}
    \caption{Median of posterior with a Laplace difference prior (left) and of a total variation regularized randomized linear least squares problem (right).}
    \label{fig:imp_ex_median}
\end{figure}

Figure \ref{fig:imp_ex_CI} shows the widths of $95\%$-credible intervals for both models. For the same parameter $\gamma$, the regularized distribution has lower uncertainty, but both models share similar features. Both models give similar results for the discontinuous jump on the left part of the signal, but the implicit model shows more noticeable peaks around the small peak in the center of the signal. Although both models give high uncertainty to the sides of the parabola in the right side of the signal, the uncertainty is more uniform for the implicit model, possibly due to the uncertainty in jumps as shown in Figure \ref{fig:imp_ex_samples}.

\begin{figure}
    \centering
    \includegraphics[width=\textwidth]{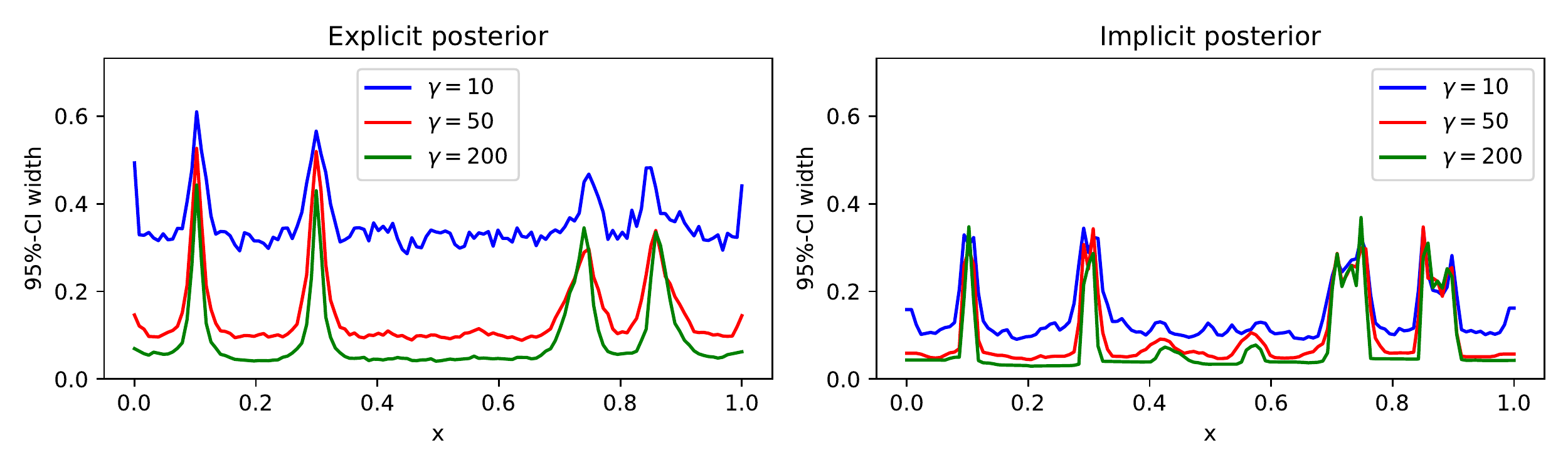}
    \caption{Width of 95\% credible intervals of a posterior with a Laplace difference prior (left) and of a total variation regularized randomized linear least squares problem (right).}
    \label{fig:imp_ex_CI}
\end{figure}

\subsection{Gibbs sampler}\label{subsec:numex_gibbs}

The same deblurring problem can also be solved used the Gibbs sampler developed in Section \ref{sec:bayes_hierarch}. Specifically, we consider the nonnegativity constrained and total variation regularized problem
\begin{equation}\label{eq:experiment2_optimization}
    \argmin_{\vec{x} \in \reals^{128}_+} \left\{\frac{\lambda}{2}\|A\vec{x}- \hat{\vec{b}}\|_2^2 + \gamma \|L\vec{x}\|_1\right\},
\end{equation}
with $\vec{e} \sim \mathcal{N}(0, \lambda^{-1}I)$ and hyperpriors $\lambda \sim \Gamma(1, 10^{-4})$ and $\gamma \sim \Gamma(1, 10^{-4})$. Samples from this hierarchical Bayesian model are obtained using a version of Algorithm \ref{sampler:PCEHGS}. Due to the cost of computing a sample, we investigate solving \eqref{eq:experiment2_optimization} approximately. The disadvantage is that samples describe a slightly different distribution, but also that inaccurate solution could have a big impact on the conditional distributions \eqref{eq:conditional_lambda} and \eqref{eq:conditional_gamma}. Figure \ref{fig:Gibbs_distributions0} shows the distributions of the hyperparameters $\lambda$ and $\gamma$, the distribution of the regularization strength $\gamma/\lambda$, and the distribution of the sparsity of the samples expressed as $\dim(\set{F(\vec{x})})$. Note that the distribution of $\lambda$ is approximately independent of the number of iterations. This can be explained by ADMM being able to minimize the objective function to low accuracy rather quickly, therefore obtaining $\|A\vec{x} - \vec{b}\|_2^2$ up to an order of magnitude. The other chains do decay quite rapidly and already give consistent results after relatively few iterations. The main issue is that ADMM requires more iterations to solve the optimization problem accurately, which is needed for an accurate computation of $\dim(\set{F(\vec{x})})$. However, due to the small size of the problem, it quickly converges to the right order of magnitude, which is enough to get stable regularization effects.

\begin{figure}
    \centering
    \includegraphics[width=\textwidth]{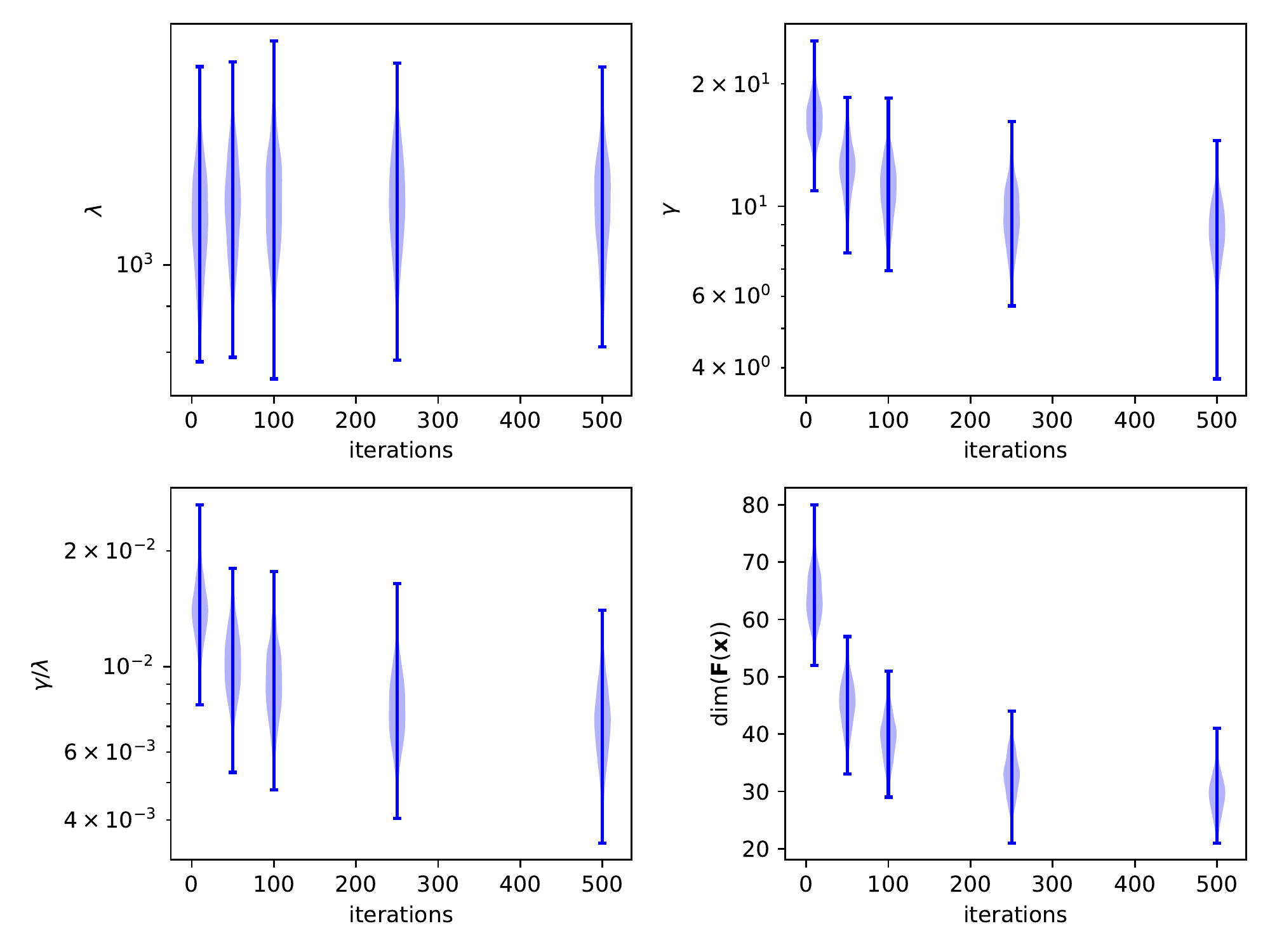}
    \caption{Distribution of the hyperparameters $\lambda$ and $\gamma$, the regularization strength $\gamma/\lambda$ and the sparsity $\dim(\set{F(\vec{x})})$ for increasing accuracy/iterations.}
    \label{fig:Gibbs_distributions0}
\end{figure}

Figure \ref{fig:Gibbs_CI} shows that ADMM quickly achieves low accuracy, but requires a lot more iterations for high accuracy, which shows the width of component-wise $95\%$ credible intervals. Although $50$ iterations seems to be enough to obtain the areas of relatively high uncertainty, peaks corresponding to large jumps, as can be seen in Figure \ref{fig:imp_ex_samples}, keep growing significantly. Such accuracy is, however, not always necessary in applications.

\begin{figure}
    \centering
    \includegraphics[width=0.8\textwidth]{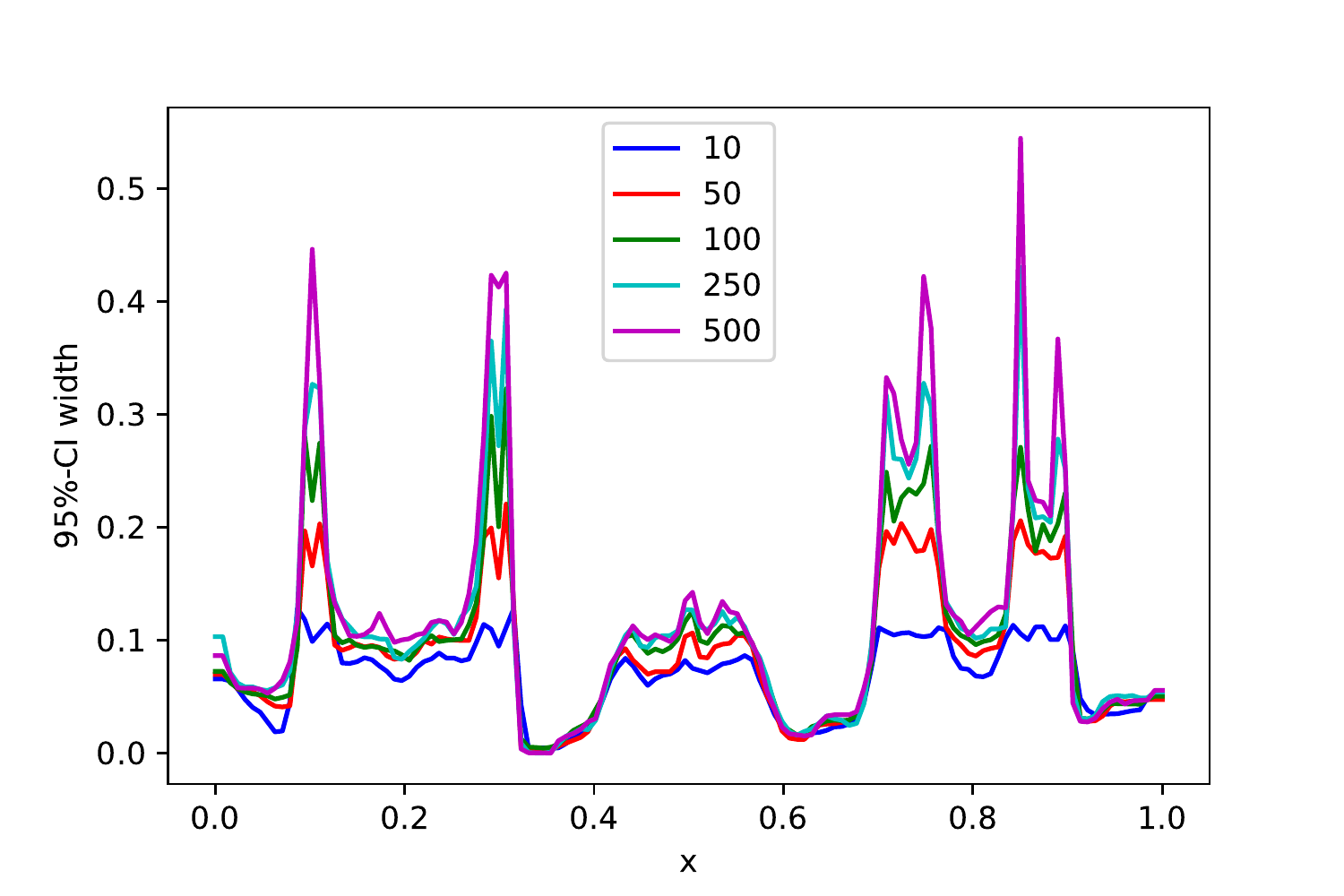}
    \caption{Width of 95\% credible intervals at increasing accuracy/iterations.}
    \label{fig:Gibbs_CI}
\end{figure}

The value of $\gamma/\lambda$ greatly impacts the regularity of the samples and thereby the measured uncertainty. Figure \ref{fig:Gibbs_distributions} shows the mean regularization strength $\gamma/\lambda$ as function of the expectation and variance of the gamma prior put on $\gamma$. The figure shows that, similarly to $\lambda$, as long as the variance is large enough, the $\gamma/\lambda$ chain is stable. For variance too small, the expectation dominates $\gamma$, thereby resulting in an unstable hyperprior.

\begin{figure}
    \centering
    \includegraphics[width=0.5\textwidth]{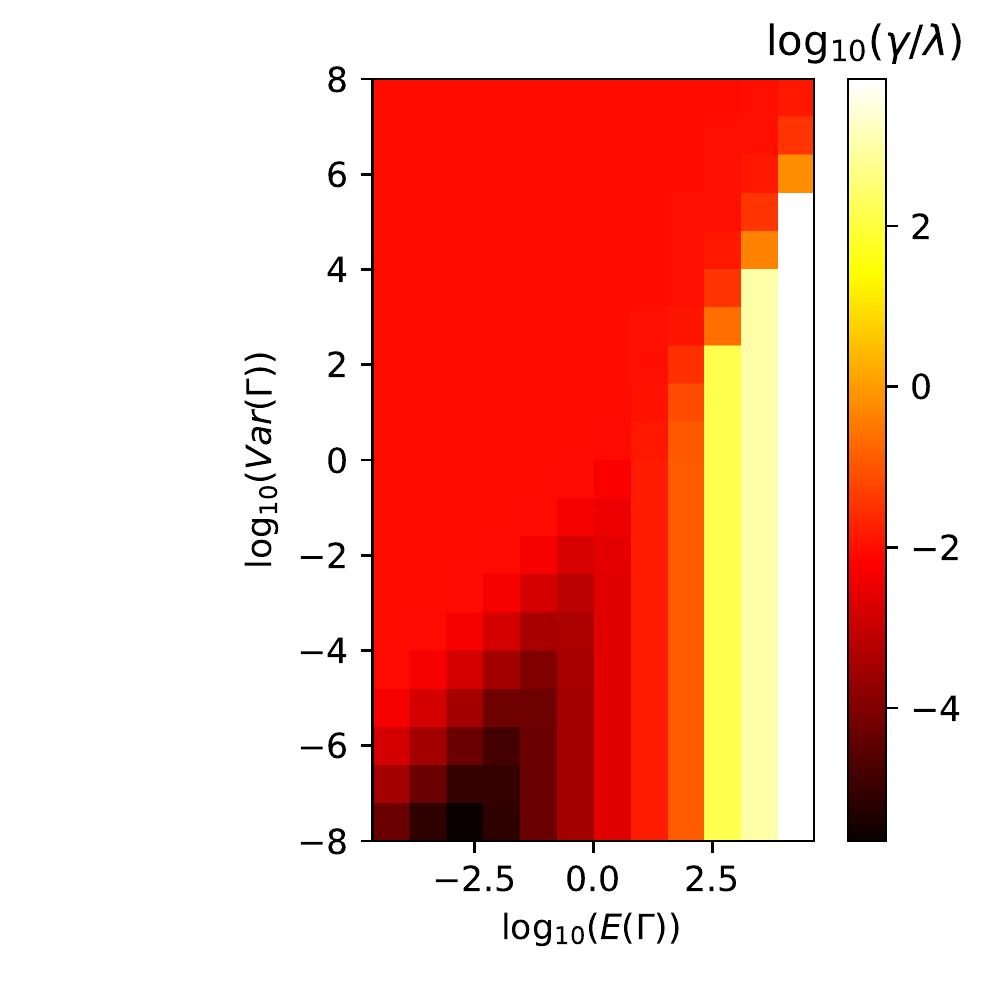}
    \caption{The average regularization strength $\gamma/\lambda$ as functions of the expectation and variance of the hyperprior on $\gamma$.}
    \label{fig:Gibbs_distributions}
\end{figure}

\subsection{Underdetermined linear system}\label{subsec:numex_CT}
Consider a small scale CT problem \cite{hansen2021computed} of the form
\begin{equation*}
    \vec{b} = A\vec{x} + \vec{e},
\end{equation*}
where the true signal $\vec{x} \in \reals^{100\times 100}$ is the Shepp-Logan phantom and the noise satisfies $\vec{e} \sim \mathcal{N}(0, \lambda^{-1}I)$ with $\lambda = 10$. Furthermore, the forward operator $A$ is a discretized Radon transform at 20 equally spaced angles from 0 to 180 degrees with 120 rays per angle and using parallel-beam geometry. The operator was generated using AIR Tools II \cite{hansen2018air}. The true signal $\vec{x}$ and the noisy sinogram $\vec{b} \in \reals^{20 \times 120}$ are shown in Figure \ref{fig:CT_data}.

\begin{figure}
    \centering
    \includegraphics[width=\textwidth]{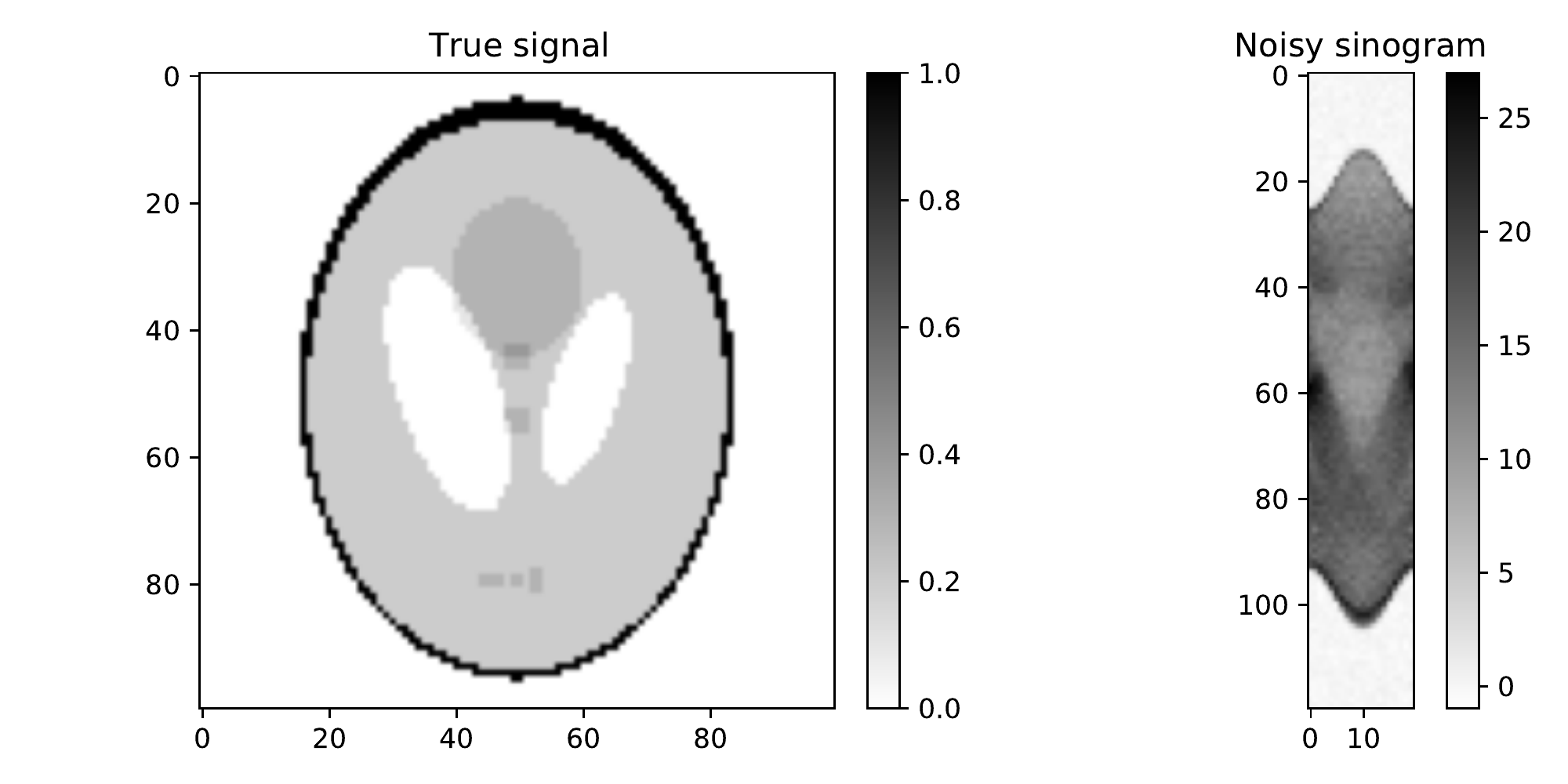}
    \caption{Shepp-Logan phantom (left) and noisy sinogram with $\lambda = 10$ (right).}
    \label{fig:CT_data}
\end{figure}

Note that this is a sparse-angle tomography problem with $10.000$ variables and only $2.400$ measurements. To solve the CT problem and quantify uncertainty, we use the framework with underdetermined randomization as described in Subsection \ref{subsec:underdetermined_random}. More precisely, we consider the probability distribution defined by
\begin{equation*}
    \argmin_{\vec{x} \in \reals^{100\times100}_+} \left\{\frac{\lambda}{2}\|A\vec{x}- \hat{\vec{b}}\|_2^2 + \gamma \|L\vec{x}\|_1\right\},
\end{equation*}
with $\hat{\vec{b}} \sim \mathcal{N}(\vec{b}, \lambda^{-1}I)$, $\gamma > 0$ and $L$ is a two-dimensional finite difference operator. Thus, we solve a randomized non-negativity constrained linear least squares problem with isotropic total variation regularization.

Figure \ref{fig:CT_results} shows the median and 95\%-credible interval widths for $\gamma = 10, 50$ and $ 100$, as computed from $500$ independent samples computed through ADMM for $200$ iterations with $\rho = 200$. Note that for all the choices of $\gamma$, the general shape has been properly reconstructed, but for the larger $\gamma$, some details have been nearly completely smoothed out. This is especially noticeable in the lack of uncertainty in the credible interval widths. By Corollary \ref{cor:support}, the support of the probability distribution is independent of the choice of $\gamma$. However, as expected from total variation regularization, larger $\gamma$ will reduce the probability of small details.

\begin{figure}
    \centering
    \includegraphics[width=\textwidth]{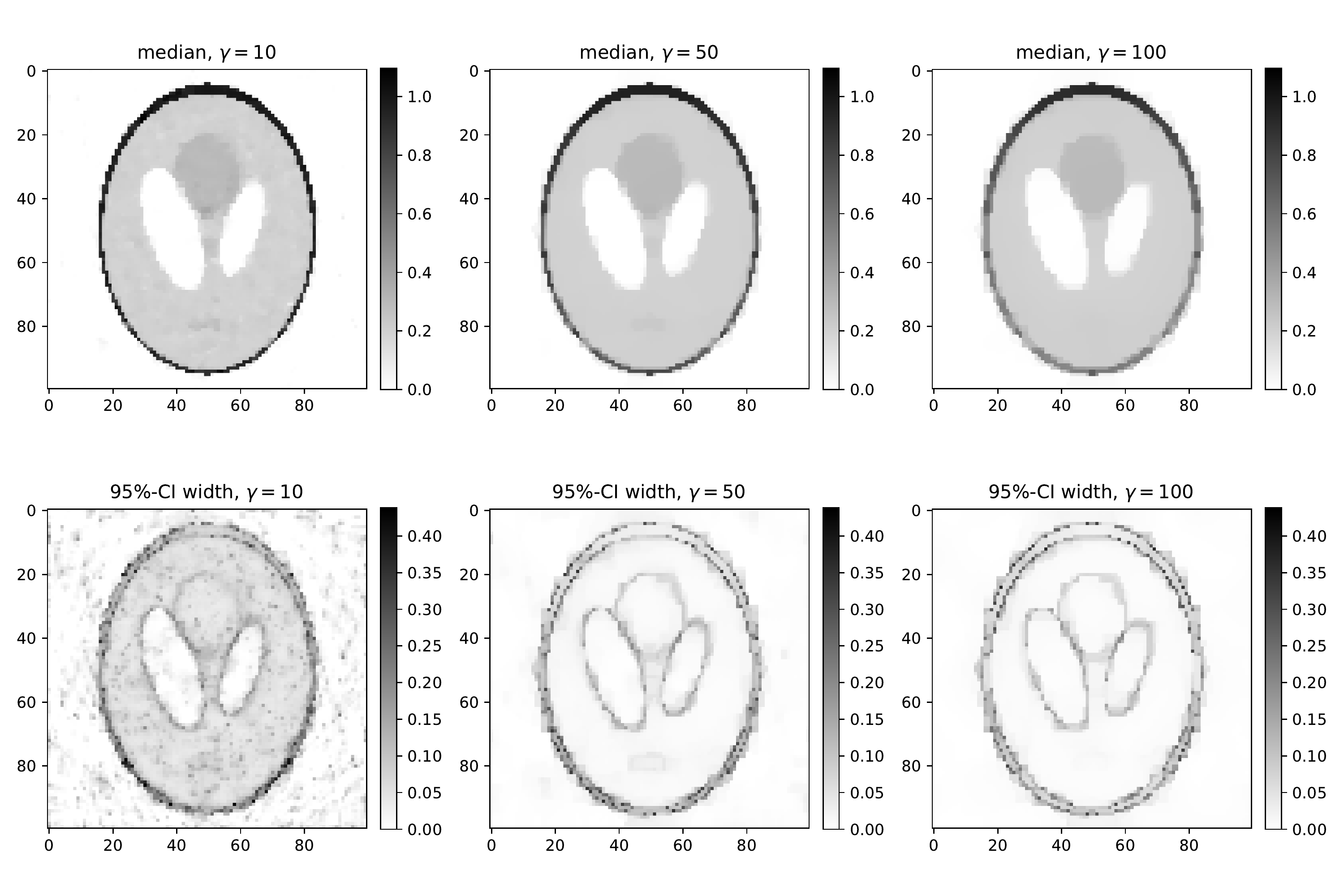}
    \caption{Component-wise median and width of 95\% credible intervals for three different regularization strengths $\gamma$.}
    \label{fig:CT_results}
\end{figure}

\newpage
\section{Conclusions and Further Research}
We have presented a framework for combining regularization with Gaussian distributions. By repeatedly solving a regularized linear least squares for different Gaussian perturbations of the data, we obtain samples from a probability distribution that is equivalent to post-processing a Gaussian posterior with a proximal operator. If the regularization function is sufficiently smooth, then this distribution is a continuous distribution. However, for sparsity inducing regularization functions, e.g., total variation and lasso, the regularized distribution assigns positive probability to low-dimensional subspaces corresponding to the sparsity modeled with the regularization function. Such distributions behave similarly to varying dimension models, but are implicitly defined by the solution of a randomized optimization problem.

We applied the theory to Bayesian linear inverse problems and discussed how the regularized Gaussian can then be interpreted as an implicitly defined posterior distribution by looking into the underlying prior distribution. This implicit prior perspective allowed us to derive Gibbs samplers for some Bayesian hierarchical models that include the regularized Gaussian based on regularization functions whose epigraph are polyhedral cones, e.g. non-negative total variation. We also provided some results on the support of the distribution in the general case where the linear least squares term is underdetermined.

Because sampling from this distribution only requires solving regularized linear least squares problems, we only need tools from optimization theory in order to obtain samples. For some small scale deblurring and computed tomography examples, we showed the difference between our framework and continuous distributions, and highlighted the robustness of the Gibbs sampler for the Bayesian hierarchical model.

Obtaining a single independent sample from the regularized distribution has approximately the same computational cost as solving a similar regularized variational inverse problem. It is therefore computationally expensive to obtain many exact independent samples. However, we have shown in a small numerical  example that inaccurate samples can still give good results at a fraction of the computational cost. To make the approach more practical, further research could include a more in-depth study on the effect that inaccurately solving the optimization problem has on the distribution of the samples.

In this work, we have not considered all possible regularization functions. A big part of the theory focused on low-dimensional subspaces, for which we only provided results for sufficiently smooth functions or non-differentiable functions with sufficiently well-behaved subdifferential. Furthermore, most theory provided relies on the linear least squares term to be strongly convex, providing only some simple properties for the general case. Further investigating these and more general properties of regularized distributions is a topic for further study.

\bibliographystyle{abbrv}
\bibliography{references}

\appendix
\section{Algorithm} \label{ap:algorithm}

Consider solving the optimization problem
\begin{equation}\label{eq:ap_solve}
    \argmin_{\vec{x} \in \reals^n}\left\{\frac{1}{2}\|A\vec{x} - \vec{b}\|_2^2 + \sum_{i=1}^{k} f_i(L_i \vec{x})\right\},
\end{equation}
where the functions $f_i : \reals^{c_i} \rightarrow \reals$ are proper lower semi-continuous functions for which the proximal operator $\prox_{f_i}^{\sigma}$ is efficiently computable.

Solving \eqref{eq:ap_solve} repeatedly for different functions $f_i$ is the main computational problem for the methods presented in this work. For the numerical experiments in Section \ref{sec:numerical_examples}, we have used a variant of the Alternating Direction Method of Multipliers (ADMM) \cite{boyd2011distributed}. For this, rewrite the unconstrained problem \eqref{eq:ap_solve} to the following constrained problem:
\begin{align}\label{eq:ap_solve_alt}
    \text{Minimize }&\left\{\frac{1}{2}\|A\vec{x} - \vec{b}\|_2^2 + \sum_{i=1}^{k} f_i(\vec{y}_i)\right\},\\
    \text{subject to }& 
    \begin{bmatrix}
    L_1 \\
    \vdots \\
    L_k
    \end{bmatrix} \vec{x} - \begin{bmatrix}
    \vec{y}_1 \\
    \vdots \\
    \vec{y}_k
    \end{bmatrix} = \vec{0}. \nonumber
\end{align}

The ADMM algorithm corresponding to \eqref{eq:ap_solve_alt} requires the evaluation of the proximal operator for $\sum_{i=1}^{k} f_i(\cdot)$ which, due to being a separable sum, can be reduced to evaluating the proximal operator for the separate $f_i$ terms \cite{parikh2014proximal, chen2016direct}. The resulting algorithm is summarized in Algorithm \ref{algo:ADMM}.

\begin{algo}[H]
\centering
\begin{minipage}{.6\textwidth}
\begin{algorithmic}[1]
\STATE{ \textbf{Input:} $A, \vec{b}, k, \{(\prox_{f_i}^{\sigma}, L_i)\}_{i = 1}^{k}, \rho, \{(\vec{y}_i, \vec{u}_i)\}_{i = 1}^{k}$}

\WHILE{Convergence criterion not met}

\STATE{\begin{varwidth}[t]{\linewidth}Compute $\vec{x} \leftarrow \argmin_{\vec{z} \in \reals^n}\big\{
\frac{1}{2}\|A\vec{x} - \vec{b}\|_2^2$

\hspace{10em}$+ \frac{\rho}{2} \sum_{i=1}^{k} \|L_i\vec{x} - \vec{y}_i + \vec{u}_i\|_2^2
\big\}$\end{varwidth}} 
\STATE{Compute $\vec{y}_i \leftarrow \prox_{f_i}^{\sigma/\rho}(L_i\vec{x} + \vec{u}_i)$ for $i = 1,\dots,k$} 
\STATE{Compute $\vec{u}_i \leftarrow \vec{u}_i + L_i\vec{x} - \vec{y}_i$ for $i = 1,\dots,k$} 
\ENDWHILE

\RETURN{$\vec{x}$}

\end{algorithmic}
\end{minipage}
\caption{Alternating Direction Method of Multipliers for separable regularization terms.}
\label{algo:ADMM}
\end{algo}

For the efficient computation of the proximal operators $\prox_{f_i}^{\sigma}$, we have the following cases. If $f_i$ is the indicator function of a closed and convex set, then the proximal operator simplifies to the Euclidean projection onto that set. Such a projection is efficient to compute for various practical sets like componentwise bounds and balls. If $f_i(\vec{x}) = \|\vec{x}\|_1$, then the proximal operator corresponds to the soft thresholding operator \cite{bauschke2011convex}.

Total variation, i.e., $g(\vec{x}) = \|L_i\vec{x}\|_1$ with first order finite difference matrix $L_i \in \reals^{(n-1)\times n}$, can be modeled in different ways. If we consider $g(\vec{x}) = f_i(L_i\vec{x})$, then it fits with the $l_1$ case mentioned above. Alternatively, for a one-dimensional signal, the proximal operator $\prox_{\|L_i\cdot\|_1}^{\sigma}$ can be computed efficiently, see \cite{condat2013direct}.

\section{Miscellaneous lemmas and proofs}

\begin{lemma}\label{lemma:pushforward_support}
Let $\mu$ be a probability measure on $\reals^n$ and $g : D\subset \reals^n \rightarrow \reals^n$ be continuous, then
\begin{equation*}
    \supp(\mu \circ g^{-1}) = \cl(g(\supp(\mu))).
\end{equation*}
\end{lemma}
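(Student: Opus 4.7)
The plan is to prove the two inclusions separately, exploiting (i) the continuity of $g$ for one direction and (ii) the fact that a Borel probability measure on a second countable space (such as $\reals^n$) assigns mass zero to the complement of its support for the other. Recall $\supp(\mu) = \{x : \mu(U) > 0 \text{ for every open } U \ni x\}$, and that $(\mu \circ g^{-1})(B) := \mu(g^{-1}(B))$.

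For the inclusion $\cl(g(\supp(\mu))) \subseteq \supp(\mu \circ g^{-1})$, I would first show $g(\supp(\mu)) \subseteq \supp(\mu \circ g^{-1})$ and then take closures, using that the support of any measure is closed. Pick $y = g(x)$ with $x \in \supp(\mu)$ and let $U$ be an open neighborhood of $y$. By continuity of $g$, the preimage $g^{-1}(U)$ is open in $D$ (hence is the intersection with $D$ of an open set in $\reals^n$, which suffices since $\mu$ is supported in $D$) and contains $x$, so $\mu(g^{-1}(U)) > 0$ by the defining property of $\supp(\mu)$. Hence $y \in \supp(\mu \circ g^{-1})$.

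For the reverse inclusion $\supp(\mu \circ g^{-1}) \subseteq \cl(g(\supp(\mu)))$, I would argue by contrapositive. Suppose $y \notin \cl(g(\supp(\mu)))$; then there exists an open neighborhood $U$ of $y$ with $U \cap g(\supp(\mu)) = \emptyset$, so $g^{-1}(U) \cap \supp(\mu) = \emptyset$, i.e., $g^{-1}(U) \subseteq \reals^n \setminus \supp(\mu)$. The key fact to invoke is that $\mu(\reals^n \setminus \supp(\mu)) = 0$, which follows because $\reals^n$ is second countable: the complement of the support is a union of open $\mu$-null sets and, by Lindelöf's property, can be written as a countable such union. Consequently $(\mu \circ g^{-1})(U) = \mu(g^{-1}(U)) = 0$, showing $y \notin \supp(\mu \circ g^{-1})$.

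The main subtlety, and the only step that is not purely formal, is the second-countability argument guaranteeing $\mu(\reals^n \setminus \supp(\mu)) = 0$; without this the reverse inclusion can fail in general topological spaces. Everything else is a direct unwinding of definitions together with the continuity of $g$ giving openness of preimages.
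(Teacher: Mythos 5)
Your proof is correct and follows essentially the same route as the paper's: the forward inclusion via continuity of $g$ (preimages of neighborhoods are neighborhoods with positive mass) plus closedness of the support, and the reverse inclusion by contradiction/contraposition using a disjoint neighborhood. The only difference is that you make explicit, via the Lindel\"of argument, the fact that $\mu(\reals^n \setminus \supp(\mu)) = 0$, a step the paper's proof uses implicitly when asserting $(\mu \circ g^{-1})(\set{N}_{\vec{y}}) = 0$.
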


\begin{proof}
Let $\vec{y} = g(\vec{x}) \in g(\supp(\mu))$ and let $\set{N}_{\vec{y}}$ be any open neighborhood of $\vec{y}$, then 
\begin{align*}
    (\mu \circ g^{-1})(\set{N}_{\vec{y}}) = \mu(g^{-1}(\set{N}_{\vec{y}})) = \mu(\hat{\set{N}}_{\vec{x}} ) > 0,
\end{align*}
where $\hat{\set{N}}_{\vec{x}}$ is an open neighborhood of $\vec{x}$. By the support of $\mu \circ g^{-1}$ being closed, we get $\cl(g(\supp(\mu))) \subseteq \supp(\mu \circ g^{-1})$.

Let $\vec{y} \in \supp(\mu \circ g^{-1})$ and assume that $\vec{y} \not \in \cl(g(\supp(\mu)))$, then there exists an open neighborhood $\set{N}_{\vec{y}}$ around $\vec{y}$ such that $\set{N}_{\vec{y}} \cap g(\supp(\mu)) = \emptyset$. Hence, $g^{-1}(\set{N}_{\vec{y}}) \cap \supp(\mu) = \emptyset$. This would imply that $(\mu \circ g^{-1})(\set{N}_{\vec{y}}) = 0$, which contradicts $\vec{y} \in \supp(\mu \circ g^{-1})$. Therefore we can conclude that $\supp(\mu \circ g^{-1}) \subseteq \cl(g(\supp(\mu)))$.
\end{proof}

\begin{lemma}\label{lemma:locally_Lipschitz_null_sets}
Let $f : \reals^n \rightarrow \reals \cup \{\infty\}$ be a locally Lipschitz continuous function with $\dom(f)$ open. Let $\set{U} \subseteq \dom(f)$ with $\text{Leb}(\set{U}) = 0$, then $\text{Leb}(f(\set{U})) = 0$, i.e., locally Lipschitz continuous functions preserve sets of Lebesgue measure zero.
\end{lemma}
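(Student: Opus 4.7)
The plan is to reduce the locally Lipschitz setting to a countable union of globally Lipschitz pieces, and then apply the standard geometric argument that a Lipschitz map distorts volumes by at most a dimensional constant times a power of the Lipschitz constant.

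First, I would exhaust the open domain by compact pieces. Since $\dom(f)$ is open, write $\dom(f) = \bigcup_{k=1}^{\infty} K_k$ with each $K_k$ a compact set contained in $\dom(f)$ (e.g., a countable collection of closed balls with rational centers and rational radii whose closures lie in $\dom(f)$). On each $K_k$, local Lipschitz continuity combined with a standard compactness argument (covering $K_k$ by finitely many balls on which $f$ is Lipschitz, and controlling chains of overlapping balls) yields a uniform Lipschitz constant $L_k < \infty$ such that $\|f(\vec{x}) - f(\vec{y})\| \leq L_k \|\vec{x} - \vec{y}\|$ for all $\vec{x}, \vec{y} \in K_k$. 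Partition $\set{U}$ accordingly as $\set{U} = \bigcup_k \set{U}_k$ with $\set{U}_k := \set{U} \cap K_k$, each still of Lebesgue measure zero.

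The core geometric step is then: if $g$ is $L$-Lipschitz on a set containing $E$ and $E$ has Lebesgue measure zero, then $g(E)$ has Lebesgue measure zero. For this, fix $\epsilon > 0$ and use the null property of $E$ to cover it by countably many cubes $\{Q_i\}$ with $\sum_i \text{vol}(Q_i) < \epsilon$. By Lipschitz continuity, $g(Q_i \cap E)$ has diameter at most $L \cdot \diam(Q_i)$, hence is contained in a cube of volume at most $C_n L^n \text{vol}(Q_i)$ for a dimensional constant $C_n$. Summing yields a cover of $g(E)$ with total outer volume at most $C_n L^n \epsilon$, and letting $\epsilon \downarrow 0$ gives that $g(E)$ is null. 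Applying this with $g = f|_{K_k}$ and $E = \set{U}_k$ shows $f(\set{U}_k)$ is null for every $k$.

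Finally, countable subadditivity of Lebesgue outer measure gives $\text{Leb}(f(\set{U})) = \text{Leb}\bigl(\bigcup_k f(\set{U}_k)\bigr) \leq \sum_k \text{Leb}(f(\set{U}_k)) = 0$, which is the desired conclusion. The main obstacle in the argument is the passage from the pointwise local Lipschitz assumption to a uniform Lipschitz constant on each compact piece; once this uniformization is established, the volumetric covering argument is standard and the countable union step is immediate.
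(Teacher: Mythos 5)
Your proof is correct and follows essentially the same route as the paper's: exhaust the open set $\dom(f)$ by countably many compact pieces, show each image $f(\set{U}\cap \set{K}_k)$ is null using the fact that Lipschitz maps preserve Lebesgue-null sets, and conclude by countable subadditivity. The one structural difference is how each compact piece is handled: you first upgrade local Lipschitz continuity to a single uniform Lipschitz constant on all of $\set{K}_k$ (the step you identify as the main obstacle), whereas the paper sidesteps that uniformization entirely by extracting a finite subcover of balls on each of which $f$ is Lipschitz and applying the null-preservation fact ball by ball; the paper also simply cites the base fact from Rudin where you prove it via the standard cube-covering volume estimate. Both variants are sound, but if you keep the uniformization step, justify it by the compactness/contradiction argument (a sequence of pairs with exploding difference quotients must accumulate either at a single point, contradicting local Lipschitzness, or at two distinct points, contradicting boundedness of $f$ on $\set{K}_k$) rather than by chaining overlapping balls, since the chaining argument as stated implicitly assumes connectedness of $\set{K}_k$.
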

\begin{proof}
It follows from \cite[Lemma 7.25]{rudin1987real} that Lipschitz continuous functions preserve sets of Lebesgue measure zero. If $f$ is not Lipschitz continuous, consider the covering $\dom(f) = \cup_{i \in \mathbb{N}}\set{K}_i$, where $\set{K}_i$ is an increasing sequence of compact sets. Because 
\begin{equation*}
    \text{Leb}(f(\set{U})) \leq \sum_{i \in \mathbb{N}} \text{Leb}(f(\set{U} \cap \set{K}_i)), 
\end{equation*}
it is enough to show that $\text{Leb}(f(\set{U} \cap \set{K})) = 0$ for any compact set $\set{K}$.

Because $f$ is locally Lipschitz continuous, for every $\vec{u} \in \set{U} \cap \set{K}$ there exists an open ball $\set{B}(\vec{u}, \epsilon)$ on which $f$ is Lipschitz continuous. Furthermore, because the set of all these balls is an open cover of the compact set $\cl(\set{U} \cap \set{K}) \subset \dom(f)$, there exist a finite subcover $\{\set{B}(\vec{u}_i, \epsilon_i)\}_{i = 1}^{k}$. Therefore we can conclude that
\begin{equation*}
    \text{Leb}(f(\set{U} \cap \set{K})) \leq \sum_{i = 1}^{k} \text{Leb}(f(\set{U} \cap \set{K} \cap \set{B}(\vec{u}_i, \epsilon_i))) = 0. 
\end{equation*}
\end{proof}

\begin{corollary}\label{lemma:locally_Lipschitz_density}
Let $f : \reals^n \rightarrow \reals \cup \{\infty\}$ be a locally Lipschitz continuous function with $\dom(f)$ open and let $\mu$ be a Borel measure on $\dom(f)$. If $\mu \ll \text{Leb}$, then $\mu \circ f \ll \text{Leb}$, i.e., if $\mu$ has a density with respect to the Lebesgue measure, then so does $\mu \circ f$.
\end{corollary}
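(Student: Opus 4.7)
The plan is to unfold the statement $\mu \circ f \ll \text{Leb}$ directly from the definition: pick any Borel set $E \subseteq \reals^n$ with $\text{Leb}(E) = 0$ and show that $(\mu \circ f)(E) = 0$. In the usage of Proposition \ref{prop:lipschitz_density}, where $f$ plays the role of the single-valued inverse of the proximal operator $\prox_{f}^{\Sigma^{-1}}$, the pushforward $\mu \circ f$ evaluated on $E$ is exactly $\mu(f(E))$: under the differentiability assumption the pre-image of $E$ under the proximal operator coincides with the forward image $f(E)$, so the law of the proximal sample assigns to $E$ the mass $\mu(f(E))$.

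The reduction then has two essentially mechanical steps. First, apply Lemma \ref{lemma:locally_Lipschitz_null_sets} to the locally Lipschitz map $f$ on the null set $E$ to conclude $\text{Leb}(f(E)) = 0$. Second, invoke the hypothesis $\mu \ll \text{Leb}$ to upgrade this to $\mu(f(E)) = 0$. Chaining these yields $(\mu \circ f)(E) = 0$ for every Lebesgue-null Borel set $E$, which is exactly $\mu \circ f \ll \text{Leb}$.

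The only mild obstacle is measurability of $f(E)$, since continuous images of Borel sets are not generally Borel. This is handled by a standard measure-theoretic remark: the image of a Borel set under a continuous map into $\reals^n$ is analytic and therefore universally measurable, so $\mu(f(E))$ is well-defined. Alternatively, one uses inner regularity of Lebesgue measure to exhaust $E$ by a countable union of compact subsets $K_i$; each $f(K_i)$ is compact and hence Borel, the bound $\text{Leb}(f(K_i)) = 0$ holds via Lemma \ref{lemma:locally_Lipschitz_null_sets}, and the conclusion follows by countable subadditivity. Modulo this routine point, the corollary is essentially a direct repackaging of Lemma \ref{lemma:locally_Lipschitz_null_sets}, and I do not anticipate any substantive difficulty.
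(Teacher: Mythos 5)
Your argument is exactly the intended one: the paper states this as an immediate corollary of Lemma \ref{lemma:locally_Lipschitz_null_sets} with no written proof, and the chain $\text{Leb}(E)=0 \Rightarrow \text{Leb}(f(E))=0 \Rightarrow \mu(f(E))=0$ is precisely what is implicit there, including your correct reading of $\mu\circ f$ as the set function $E\mapsto \mu(f(E))$. Your added remark on the measurability of $f(E)$ (via compact exhaustion or universal measurability) is a reasonable tidying-up of a point the paper glosses over, not a deviation in approach.
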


\begin{lemma}\label{lemma:sweeping}
Let $\set{U} \subseteq \reals^k$ and $\set{V} \subseteq \reals^l$ be compact sets with continuous functions $\phi:\set{U} \times \set{V} \rightarrow\reals^n$ and $\psi:\set{U}\times\set{V}\rightarrow\reals^n$. If $h : \set{U} \times \set{V} \rightarrow \phi(\set{U}, \set{V}) + \psi(\set{U}, \set{V})$ defined by $h(\vec{u}, \vec{v}) = \phi(\vec{u}, \vec{v}) + \psi(\vec{u}, \vec{v})$ is one-to-one, then $h$ is a homeomorphism between $\set{U} \times \set{V}$ and the Minkowski sum $\phi(\set{U}, \set{V}) + \psi(\set{U}, \set{V})$.
\end{lemma}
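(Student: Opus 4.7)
The plan is to reduce the lemma to the classical topological fact that a continuous bijection from a compact space onto a Hausdorff space is a homeomorphism, and then to identify the claimed target set carefully.

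First, I would verify the standard hypotheses. The domain $\set{U} \times \set{V}$ is a product of two compact subsets of Euclidean space, hence compact in $\reals^{k+l}$. The map $h$ is continuous on $\set{U} \times \set{V}$ as the pointwise sum of the two continuous functions $\phi$ and $\psi$. The codomain sits inside $\reals^n$, which is Hausdorff, and so any subspace of $\reals^n$ — in particular the Minkowski sum $\phi(\set{U}, \set{V}) + \psi(\set{U}, \set{V})$ — is Hausdorff in the subspace topology.

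Second, I would invoke the standard argument for the homeomorphism property. By hypothesis $h$ is injective, so viewed as a map $h : \set{U} \times \set{V} \to h(\set{U}\times\set{V})$ it is a continuous bijection. To obtain continuity of the inverse, it suffices to show $h$ is a closed map: if $\set{C} \subseteq \set{U} \times \set{V}$ is closed, then $\set{C}$ is compact (closed subset of a compact set), hence $h(\set{C})$ is compact (continuous image of a compact set), hence $h(\set{C})$ is closed in the Hausdorff target. Combining this with continuity and bijectivity yields the homeomorphism.

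The main obstacle, and the only nontrivial point to address for the statement exactly as worded, is the identification of the target. The image $h(\set{U}\times\set{V}) = \{\phi(\vec{u},\vec{v}) + \psi(\vec{u},\vec{v}) : (\vec{u},\vec{v}) \in \set{U}\times\set{V}\}$ is where $h$ genuinely lands, and it is contained in the Minkowski sum $\phi(\set{U}, \set{V}) + \psi(\set{U}, \set{V})$. The declared codomain in the lemma must be read in this sense, i.e.\ as $h(\set{U}\times\set{V})$ rather than every off-diagonal combination $\phi(\vec{u}_1,\vec{v}_1) + \psi(\vec{u}_2,\vec{v}_2)$ — otherwise surjectivity of $h$ would fail in general and no homeomorphism statement could hold. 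Under this reading the argument above delivers the lemma, and this is exactly what is needed in the proof of Theorem~\ref{thm:low_dimensional}, where the resulting $n$-dimensional homeomorphic embedding into $\reals^n$ is then used (via invariance of domain) to conclude that the image contains an open set.
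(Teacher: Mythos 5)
Your proof is correct and follows essentially the same route as the paper, which likewise invokes the standard fact that a continuous bijection from a compact space to a Hausdorff space is a homeomorphism (the paper simply cites a reference where you spell out the closed-map argument). Your observation that the codomain $\phi(\set{U},\set{V})+\psi(\set{U},\set{V})$ must be read as the image $h(\set{U}\times\set{V})$ rather than the full Minkowski sum of the two image sets is a fair clarification of the paper's notation, but it does not change the argument.
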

\begin{proof}
Note that $h$ is a bijective continuous function from the compact set $\set{U} \times \set{V}$ to the Hausdorff space $\phi(\set{U}, \set{V}) + \psi(\set{U}, \set{V}) \subset \reals^n$, hence by \cite[Proposition 5.2.5]{waldmann2014topology}, $h$ is a homeomorphism.
\end{proof}

\end{document}